\DeclarePairedDelimiter{\ceil}{\lceil}{\rceil}
\numberwithin{equation}{section}
\renewcommand\vec{\bm}
\newtheorem{theorem}{Theorem}[section]
\newtheorem{lemma}[theorem]{Lemma}
\newtheorem{Proposition}[theorem]{Proposition}
\newtheorem{Conjecture}[theorem]{Conjecture}
\begin{document}
\title[Arithmetic Combinatorics on Vinogradov Systems]{Arithmetic Combinatorics on \\ Vinogradov Systems}
\author[Akshat Mudgal]{Akshat Mudgal}
\subjclass[2010]{11B30, 11P99}
\keywords{Additive combinatorics, Balog-Szemer\'edi-Gowers theorem, Sum-Products, Vinogradov's mean value theorem}
\date{} 
\address{Department of Mathematics, Purdue University, 150 N. University Street, West Lafayette, IN 47907-2067, USA}
\email{am16393@bristol.ac.uk, amudgal@purdue.edu}
\maketitle

\begin{abstract}
In this paper, we present a variant of the Balog-Szemer\'edi-Gowers theorem for the Vinogradov system. We then use our result to deduce a higher degree analogue of the sum-product phenomenon. 
\end{abstract}

\section{Introduction}
Given a finite subset $A$ of real numbers, we define the sumset $A+A$ as
$$ A+A = \{ a_1 + a_2 \ | \  a_1, a_2 \in A \}, $$
and the additive energy $E(A)$ as
$$ E(A) = |\{  (a_1,a_2,a_3,a_4) \in A^{4} \ | \ a_1 + a_2 = a_3 + a_4   \}|. $$
It is known that if $A$ is an arithmetic progression, then $A+A$ is a small set and a simple application of the Cauchy-Schwarz inequality shows us that $E(A)$ must be large. However the reverse does not hold, that is, given any finite subset $A$ of real numbers with large additive energy $E(A)$, it is not always true that $|A+A|$ is small. In such a situation, the Balog-Szemer\'edi-Gowers theorem gives us the next best alternative by finding a significantly large subset $A'$ of $A$ such that $A' + A'$ is a small set. It is natural to ask whether this circle of ideas may be generalised to additive energies defined via systems of equations of higher degree. In this note, we establish a variant of Balog-Szemer\'edi-Gowers theorem for the Vinogradov system, that is, the system of equations
\begin{equation} \label{vinos}
\sum_{i=1}^{s} x_i^j  = \sum_{i=1}^{s} y_i^j \ \ (1 \leq j \leq k), 
\end{equation}
where $s$ and $k$ are natural numbers. The number of solutions of this system of equations, when the variables $x_i,y_i$ are restricted to a set $A$, has been widely studied by various authors and has close connections to many other problems in additive number theory (see \cite{Woo2014}). Our objective will be a result of a rather different flavour as compared to the traditional goals of earlier work whose focus lay in bounding the number of solutions to the above system of equations. In fact, given a set $A$ with many solutions to the Vinogradov system, we will obtain a structure theorem showing that one can extract a large subset of $A$ with desirable arithmetic properties. \par

We record some definitions to state our main result. We denote by $J_{s,k}(A)$ the number of solutions to the above system $\eqref{vinos}$ with $x_i,y_i \in A$, and we write
\[ \mathscr{A} =  \{ (a,a^2,\dots, a^k) \ | \ a \in A\} \subseteq \mathbb{R}^k. \]
Furthermore, we define $l \mathscr{A}$ as the $l$-fold sumset of $\mathscr{A}$, that is,
$$l \mathscr{A}  = \{ a_1 + \dots + a_{l} \ | \ a_i \in \mathscr{A} \  \text{for} \ 1 \leq i \leq l \} .  $$
For a finite set $A$ of real numbers, we define its diameter $X_{A} = | \sup A - \inf A|$. Lastly, a key ingredient in our results is an appropriate upper bound for $J_{s,k}(A)$, which, with the current technology, is available for well-spaced sets $A$. We call a finite subset $A$ of real numbers to be \emph{well-spaced} if 
\[  | A \cap (j, j+1]| \leq 1 \ \text{for all} \ j \in [ \inf A - 1, \sup A] \cap \mathbb{Z} . \]
With these definitions in hand, we state our main result as follows. 

\begin{theorem} \label{bsgvmvt}
For every $m \geq 1$, natural numbers $s,k$ such that $s \geq k(k+1)$, and sufficiently small positive number $\epsilon$, the following holds. Suppose that $A$ is a finite subset of real numbers such that $A$ is well-spaced, $|A|$ is sufficiently large and $ X_{A} \leq {|A|}^m$. Further, let $\alpha \leq 1$ be any positive real number such that 
\begin{equation} \label{h1}
 \log{\alpha^{-1}} \leq \frac{\epsilon k(k+1)}{43200}\log{|A|}  \ \text{and} \  J_{s,k}(A) = \alpha {|A|}^{2s-k(k+1)/2}.  \end{equation}
Then there exists $A' \subseteq A$ with $|{A'}| \geq {\alpha}^{\epsilon/4} |A|^{1 - \epsilon k^2}$ such that for all $l \in \mathbb{N}$, we have
\[ |l \mathscr{A'}| \ \leq \  {\alpha}^{-(1 + 2\epsilon(l+k^2))}  |A'|^{\frac{k(k+1)}{2} (1 + \epsilon( l +  k^2 ))}, \]
where $\mathscr{A'} =  \{ (a,a^2,\dots, a^k) \ | \ a \in A'\}$.
\end{theorem}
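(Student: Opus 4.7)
The plan is to adapt the Balog-Szemer\'edi-Gowers strategy to the $s$-fold additive energy of the curve set $\mathscr{A} \subseteq \mathbb{R}^k$, combining a popularity extraction with the Vinogradov mean value estimate available on well-spaced subsets, and then using a Pl\"unnecke-Ruzsa type inequality in $\mathbb{R}^k$ to propagate control from $s \mathscr{A}'$ to all $l \mathscr{A}'$.

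First, I would reinterpret the hypothesis: $J_{s,k}(A)$ is exactly the $s$-fold additive energy $\sum_{v} r(v)^2$ of $\mathscr{A}$, where $r(v)$ counts representations $v = w_1 + \dots + w_s$ with each $w_i \in \mathscr{A}$. Since $\sum_v r(v) = |A|^s$, Cauchy-Schwarz yields the matching lower bound $|s \mathscr{A}| \geq \alpha^{-1} |A|^{k(k+1)/2}$, and the hypothesis says $\mathscr{A}$ essentially saturates this bound up to the factor $\alpha^{-1}$.

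The next step is a popularity pigeonhole: fix a threshold $T \asymp \alpha |A|^{s - k(k+1)/2}$ and let $P = \{v \in s\mathscr{A} : r(v) \geq T\}$. A short calculation then gives $|P| \ll \alpha^{-1}|A|^{k(k+1)/2}$ together with $\sum_{v \in P} r(v)^2 \geq J_{s,k}(A)/2$. Iterated pigeonholing on the coordinate slots of the $s$-tuples whose sums land in $P$ locates a large subset $A' \subseteq A$ of candidate first coordinates, of cardinality at least $\alpha^{\epsilon/4} |A|^{1 - \epsilon k^2}$, such that many translates of $\mathscr{A}'$ land inside $P$; this forces $|s \mathscr{A}'| \ll |P|$. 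Rewriting in terms of $|A'|$ using $|A| \leq |A'|^{1/(1-\epsilon k^2)} \alpha^{-O(\epsilon)}$ recovers the desired bound for $l = s$ with exponent $k(k+1)/2 \cdot (1 + O(\epsilon k^2))$. Here the well-spaced hypothesis is invoked to apply the Vinogradov mean value theorem to $A'$, providing the upper bound $J_{s,k}(A') \ll |A'|^{2s-k(k+1)/2}$ needed to keep the extraction argument self-consistent.

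Finally, the Pl\"unnecke-Ruzsa inequality, applied to $\mathscr{A}' \subseteq \mathbb{R}^k$, propagates the bound on $|s \mathscr{A}'|$ to arbitrary $l$-fold sumsets, yielding a bound of the shape $|l \mathscr{A}'| \leq (|s \mathscr{A}'|/|\mathscr{A}'|)^{O(l/s)} |\mathscr{A}'|$ and producing the claimed exponent scaling linearly in $l$. The main obstacle I expect is the BSG-type extraction itself: the classical case $s = 2$ uses bipartite graph regularity, whereas here one must work with $s$-uniform hypergraph structure, losing no worse than polynomial factors in $\alpha$, while simultaneously tracking the $\epsilon$-dependent exponents in both $|A'|$ and $|l \mathscr{A}'|$ and ensuring that the VMVT upper bound remains applicable to every intermediate subset generated during the iteration.
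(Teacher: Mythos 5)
Your proposal correctly identifies the popularity extraction and the role of the well-spaced hypothesis via the Vinogradov mean value bound, and it has the same broad shape as the paper's argument. However, the final step — applying Pl\"unnecke--Ruzsa directly to $\mathscr{A}'$ — does not work, and this is not a technicality but the central point of the proof.

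The issue is that $\mathscr{A}'$ is a subset of the moment curve, so it has \emph{intrinsically} large doubling: for any subset of the curve one has $|\mathscr{A}' + \mathscr{A}'| \gg |\mathscr{A}'|^{3-\epsilon}$ (and more generally $|u\mathscr{A}'| \gg |\mathscr{A}'|^{\min(u,k(k+1)/2)-\epsilon}$), so the doubling constant of $\mathscr{A}'$ is of order $|A'|^{2}$. Feeding that into Pl\"unnecke--Ruzsa produces bounds whose exponent grows \emph{linearly} in $l$ in the base, i.e.\ $|l\mathscr{A}'| \ll |A'|^{\Theta(l)}$, whereas the claimed bound is $|A'|^{\frac{k(k+1)}{2}(1+\epsilon(l+k^2))}$, whose exponent stays close to $k(k+1)/2$ for all $l$ when $\epsilon$ is small. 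There is also no Pl\"unnecke-type inequality of the form $|lA| \leq (|sA|/|A|)^{O(l/s)}|A|$ that applies unconditionally; the standard statements (such as Lemma~\ref{prineq}) require a hypothesis on the doubling $|A+A|/|A|$, not merely on $|sA|/|A|$. Relatedly, your pigeonholing step, as described, fixes all but one coordinate slot and hence only yields $\mathscr{A}' + c \subseteq \Sigma(G)$ for some constant $c$; this bounds $|\mathscr{A}'|$, not $|s\mathscr{A}'|$.

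The paper's proof fills this gap by inserting a genuine Balog--Szemer\'edi--Gowers step \emph{in the sum space} $\mathbb{R}^k$ rather than on $A$ itself. After the hypergraph pruning \`a la Borenstein--Croot, one shows (Proposition~\ref{prebsg}) that two restricted-sumset projections $\Sigma(Y_2)$ and $\Sigma(R(x))$ have additive energy close to maximal, and applies Balog's refinement of BSG (Theorem~\ref{bsg}) to extract a set $S_2 \subseteq \Sigma(Y_2)$ with $|S_2 + S_2| \leq |S_2|^{1+\epsilon/2}$. It is $S_2$, not $\mathscr{A}'$, that has small doubling, and the set $A'$ is then chosen so that $\mathscr{A}' + \Sigma(w) \subseteq S_2$ for a fixed string $w$. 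Consequently $l\mathscr{A}' + l\Sigma(w) \subseteq lS_2$, and Pl\"unnecke--Ruzsa applied to $S_2$ gives $|lS_2| \leq |S_2|^{1 + \epsilon l/2}$, which dominates $|l\mathscr{A}'|$. Without this detour through a small-doubling container in $\mathbb{R}^k$, there is no route from control of a single sumset of $\mathscr{A}'$ to control of all iterated sumsets with the stated exponents.
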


In the above theorem, it is implicit that $\epsilon$ must be small in terms of $s$ and $k$ while $|A|$ must be large enough in terms of $\epsilon$, $k$ and $m$. We remark that the case $k = 1, s=2$ of our result follows as a straightforward corollary from the Balog-Szemer\'edi-Gowers theorem. Thus our conclusion can be seen as a higher degree analogue of the Balog-Szemer\'edi-Gowers theorem.\par
 
 We now relate estimates for $|l \mathscr{A}|$ to multiplicative properties of the set $A$. Given a finite set $A \subseteq \mathbb{R}\setminus \{0\}$, we define the product set $AA$ and the quotient set $A/A$ as  
\[ AA = \{ a_1 a_2 \ | \ a_1, a_2 \in A \} \ \text{and} \ A/A = \{ a_2/a_1 \ | \ a_1, a_2 \in A \}. \]
We recall Vinogradov's $\ll$ and $\gg$ notation. Thus, the expressions $X \ll Y$ and $Y \gg X$ imply that there exists some absolute constant $C> 0$ such that $|X| \leq C|Y|$. With this notation in hand, we now state our second result.

\begin{theorem} \label{vmvtsp}
Let $m$ and $\epsilon$ be positive real numbers with $\epsilon$ sufficiently small, and let $s,k$ be natural numbers such that $2 \leq k \leq s \leq k(k+1)/2$. Suppose there is a finite set $A \subseteq \mathbb{R} \setminus \{0\}$ such that $A$ is well-spaced, $|A|$ is large enough, and $X_A \leq |A|^m$. Then we have
\begin{equation} \label{vmvtsumprod} 
 |A|^{2s - 2\epsilon}  \ll  |s \mathscr{A} + s \mathscr{A}| |A/A|^{s - 1/2 - \epsilon}.  
\end{equation}
\end{theorem}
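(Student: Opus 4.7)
The strategy is to adapt Solymosi's sum--product argument to the Veronese-curve setting, combining it with the Vinogradov mean value theorem (MVT) bound $J_{s,k}(B) \ll_\epsilon |B|^{s+\epsilon}$ available for well-spaced $B$ when $s \le k(k+1)/2$. After a sign pigeonhole one may assume $A \subseteq \mathbb{R}^+$. For each $r \in A/A$ set $B_r = \{a \in A : r a \in A\}$, so that $\sum_{r \in A/A} |B_r| = |A|^2$, and write $\lambda_r = (r, r^2, \dots, r^k)$ with $\cdot$ denoting coordinate-wise multiplication. Order $A/A = \{\rho_1 < \cdots < \rho_N\}$.

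For each consecutive pair $(\rho_i, \rho_{i+1})$ I would introduce the map $\Psi_i : s\mathscr{B}_{\rho_i} \times s\mathscr{B}_{\rho_{i+1}} \to (s\mathscr{A} + s\mathscr{A})^2$ defined by $\Psi_i(\vec{u}, \vec{v}) = (\vec{u} + \vec{v},\, \lambda_{\rho_i}\cdot\vec{u} + \lambda_{\rho_{i+1}}\cdot\vec{v})$. The image lies in $(s\mathscr{A}+s\mathscr{A})^2$: writing $\vec{u} = \sum_l (b_l, \dots, b_l^k)$ with $b_l \in B_r$, one has $\lambda_r\cdot(b_l, \dots, b_l^k) = ((rb_l), \dots, (rb_l)^k) \in \mathscr{A}$ since $r b_l \in A$, so $\lambda_r \cdot \vec{u} \in s\mathscr{A}$. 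Moreover $\Psi_i$ is injective because coordinate-wise the system $u_j + v_j = X_j$, $\rho_i^j u_j + \rho_{i+1}^j v_j = Y_j$ has nonzero determinant $\rho_{i+1}^j - \rho_i^j$. The Solymosi observation is that the first-coordinate slope of any image point is $Y_1/X_1 = (\rho_i u_1 + \rho_{i+1} v_1)/(u_1 + v_1) \in (\rho_i, \rho_{i+1})$, a convex combination with strictly positive weights ($u_1, v_1 > 0$ as sums of positive reals). These open intervals are pairwise disjoint across $i$, so the images $\mathrm{Im}(\Psi_i)$ are disjoint in $(s\mathscr{A}+s\mathscr{A})^2$, whence by injectivity
\[ \sum_{i=1}^{N-1} |s\mathscr{B}_{\rho_i}|\,|s\mathscr{B}_{\rho_{i+1}}| \;\le\; |s\mathscr{A}+s\mathscr{A}|^2. \]
Applying the Cauchy--Schwarz bound $|s\mathscr{B}_r| \ge |B_r|^{2s}/J_{s,k}(B_r)$ together with the MVT estimate (valid since $B_r \subseteq A$ is well-spaced with $X_{B_r} \le |A|^m$) yields $|s\mathscr{B}_r| \gg |B_r|^{s-\epsilon}$, and therefore
\[ \sum_{i=1}^{N-1} |B_{\rho_i}|^{s-\epsilon}\,|B_{\rho_{i+1}}|^{s-\epsilon} \;\ll\; |s\mathscr{A}+s\mathscr{A}|^2. \]

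A dyadic partition of $A/A$ by the scale of $|B_r|$, with the same slope-disjointness rerun within each dyadic slice (the monotone induced ordering on each slice again produces disjoint first-coordinate slope intervals), promotes the pairwise bound into
\[ \sum_{r \in A/A} |B_r|^{2(s-\epsilon)} \;\ll\; (\log|A|)\,|s\mathscr{A}+s\mathscr{A}|^2 + |A|^{2(s-\epsilon)}. \]
On the other hand, the power-mean inequality (with $p = 2(s-\epsilon) \ge 1$, using $s \ge 2$) applied to $\sum_r |B_r| = |A|^2$ furnishes the matching lower bound $\sum_r |B_r|^{2(s-\epsilon)} \ge |A/A|^{1-2(s-\epsilon)}\,|A|^{4(s-\epsilon)}$. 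A short case analysis based on which term in the upper bound dominates finishes the proof. If the first term dominates, rearrangement delivers the desired inequality directly (with $\log|A|$ absorbed into $|A|^\epsilon$). If the second term dominates, one deduces $|A/A| \gg |A|^{2(s-\epsilon)/(2(s-\epsilon)-1)}$; combining this with the baseline bound $|s\mathscr{A}+s\mathscr{A}| \gg |A|^{s-\epsilon}$ (itself derived from the same Cauchy--Schwarz-plus-MVT chain applied directly to $A$) closes the estimate, since the exponents simplify to exactly $2s - 2\epsilon$.

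The delicate point is verifying that Solymosi's consecutive-pairs disjointness transplants faithfully to the higher-dimensional Veronese setting; the key insight is that inspecting only the first-coordinate slope suffices to separate images, so one recovers the savings factor of $|A/A|^{1/2}$ (relative to summing over all pairs of ratios) which produces the sharp exponent $s - 1/2$ on $|A/A|$. The other subtlety is the essential role of Vinogradov's MVT, which is precisely what converts the elementary Cauchy--Schwarz bound on $|s\mathscr{B}_r|$ into the sharp factor $|B_r|^{s-\epsilon}$ that the power-mean step consumes; this is where the hypotheses $s \le k(k+1)/2$ and well-spacedness of $A$ enter non-trivially.
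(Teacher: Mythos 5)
Your overall strategy — Solymosi's convex-combination-of-slopes argument transplanted to the Veronese curve, with the Vinogradov mean value theorem supplying a lower bound on $|s\mathscr{B}_r|$ via Cauchy--Schwarz — is precisely the one the paper uses; your map $\Psi_i$ is the paper's sets $l_i$ in disguise, and your injectivity and slope-disjointness observations correspond to the paper's Lemmas 5.1 and 5.3. The bookkeeping differs: you dyadically partition $A/A$ by $|B_r|$, sum across all slices, and close with a power-mean inequality, whereas the paper pigeonholes the multiplicative energy onto a single popular dyadic level $2^I$ and argues by contradiction. Both routes are legitimate, and your closing case analysis does simplify to the stated exponent.

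However, there is a genuine gap in the step ``$|s\mathscr{B}_r| \gg |B_r|^{s-\epsilon}$''. The bound actually available is Theorem \ref{BDG1}, namely $J_{s,k}(B) \ll X_B^{\epsilon'}\bigl(|B|^s + |B|^{2s-k(k+1)/2}\bigr)$, not the cleaner $J_{s,k}(B) \ll |B|^{s+\epsilon}$ you quote. For $B_r \subseteq A$ one only has $X_{B_r} \le |A|^m$, so Cauchy--Schwarz gives $|s\mathscr{B}_r| \gg |B_r|^s / |A|^{m\epsilon'}$, and this is $\gg |B_r|^{s-\epsilon}$ only when $|B_r|$ is at least a fixed positive power of $|A|$; for a ratio $r$ with, say, $|B_r|=2$ the claimed estimate is simply false. (A related issue: Theorem \ref{BDG1} is stated only for sets that are large enough, which again fails for small $B_r$.) This is exactly why the paper, before invoking the mean value theorem, first shows that the popular level satisfies $2^I \gg |A|^{1/2}/\log|A|$ — via the provisional assumptions $|A/A| < |A|^{1+\delta_s}$ and $|A/A| < |A|^{3/2}$ — so that the sets $p_i$ to which Theorem \ref{BDG1} is applied satisfy $|p_i| = d(s_i) \ge 2^I \gg |A|^{1/3}$. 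Your all-slices approach could probably be repaired by bounding the contribution from slices with $|B_r| < |A|^{\eta}$ (for a suitably small fixed $\eta>0$) trivially and checking it is dominated by the power-mean lower bound, but you neither state nor carry out that step; without it the displayed inequality $\sum_{r}|B_r|^{2(s-\epsilon)} \ll (\log|A|)\,|s\mathscr{A}+s\mathscr{A}|^2 + |A|^{2(s-\epsilon)}$ is not justified.
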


We note that if one takes $k=s=1$ in the above result, we get a bound of the shape
\[ |A|^{4- 4\epsilon} \ll |A+A|^2 |A/A|,\]
which is a straightforward consequence of \cite[Lemma $2.3$]{So2009}. This bound holds for all finite sets $A$ of real numbers, and shows that either
\[ |A+A| \gg |A|^{4/3 - \epsilon} \ \text{or} \ |A/A| \gg |A|^{4/3 - \epsilon}. \]
This is known as a type of sum-product phenomenon. We note that $\eqref{vmvtsumprod}$ implies that either 
\[ |s \mathscr{A} + s \mathscr{A}| \gg |A|^{s(1 + \delta)} \gg |s \mathscr{A}|^{1 + \delta} \ \text{or} \ |A/A| \gg |A|^{1 + \delta}, \]
where $\delta = 1/(4s-1) - \epsilon > 0$ is permissible when $A$ and $\epsilon$ satisfy the hypothesis of Theorem $\ref{vmvtsp}$.  Thus our result can be seen as a higher degree analogue of the sum-product phenomenon. In fact, we follow ideas from Solymosi's work on sum-product results \cite{So2009} to prove Theorem $\ref{vmvtsp}$. \par

We comment briefly on the sum product phenomenon as well. This is a large collection of results originating from a paper of Erd\H{o}s and Szemer\'{e}di \cite{ES1983} which conjectured that for all $\delta < 1$ and finite sets of natural numbers $A$, one has
\begin{equation}  \label{sumproddef}
|A+A| + |AA| \gg |A|^{1 + \delta}.
\end{equation}
Many authors have worked on this kind of problem, and in particular, Solymosi \cite{So2009} proved that one can take $\delta < 1/3$ in $\eqref{sumproddef}$. The current best known bound was given by Shakan \cite{Sh2018} which states that $\delta < 1/3 + 5/5277$ is permissible in $\eqref{sumproddef}$.
 \par

We now consider the Vinogradov system with $s,k$ satisfying $k \geq 2$ and $s= k(k+1)/2$. Suppose we have a set $A$ that satisfies the hypothesis of Theorem $\ref{vmvtsp}$ as well as the estimate
\[  |s \mathscr{A} + s \mathscr{A}|  \leq |A|^{k(k+1)/2 + \epsilon}. \]
In this case, $\eqref{vmvtsumprod}$ implies the bound
\[ |A|^{s - 3\epsilon} \ll |A/A|^{s-1/2 - \epsilon}. \]
This gives us 
\begin{equation} \label{endg}
 |A/A| \gg |A|^{1 + \delta}, 
\end{equation}
where $\delta = (1/2 - 2\epsilon)/(s-1/2 - \epsilon) = (1 - 4\epsilon)/(2s - 1 - \epsilon) > 0$.
\par

With estimates for $A/A$, we now use the Pl{\"u}nnecke-Ruzsa theorem to derive estimates for $AA$. Thus, applying Lemma $\ref{prineq}$ for the multiplicative group $\mathbb{R}\setminus \{0 \}$, we get
\[ \frac{|A/A|}{|A|} \leq \bigg( \frac{|AA|}{|A|} \bigg)^2. \]
Combining this with $\eqref{endg}$, we see that
\[ |AA| \gg |A|^{1 + \delta / 2}. \] 

Thus, the above conclusion and Theorem $\ref{bsgvmvt}$ combine in a suitable way after some appropriate rescaling. We record this below.

\begin{theorem}  \label{main}
For every $m \geq 1$, all sufficiently small positive numbers $\epsilon$ and all natural numbers $s,k$ such that $s \geq k(k+1)$ and $k \geq 2$, the following holds. Suppose that $A$ is a finite, well-spaced subset of $\mathbb{R} \setminus \{0\}$ such that $|A|$ is sufficiently large and $ X_{A} \leq {|A|}^m$. Let $\alpha \leq 1$ be any real number such that
\[ \log{\alpha^{-1}} \leq \frac{\epsilon k(k+1)}{43200}\log{|A|}  \ \text{and} \  J_{s,k}(A) = \alpha {|A|}^{2s-k(k+1)/2}. \]
Then we have
\[ |A/A| \gg |A|^{1 + \delta - \epsilon k^2 } \ \text{and} \ |AA| \gg |A|^{1 + \delta/2 - 4 \epsilon k^2}, \] 
for $\delta = 1/(k(k+1) -1) > 0$.
\end{theorem}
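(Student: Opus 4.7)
The plan is to plug the subset produced by Theorem \ref{bsgvmvt} into Theorem \ref{vmvtsp} and then convert the resulting lower bound on $|A/A|$ into a bound on $|AA|$ via Pl\"unnecke--Ruzsa. The paragraphs immediately preceding the theorem essentially sketch this strategy applied directly to $A$; my task is to run the argument on the subset $A'$ supplied by Theorem \ref{bsgvmvt} and to track the $\epsilon$-losses carefully.

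First I apply Theorem \ref{bsgvmvt} to $A$ with the given $s,k,\alpha,\epsilon$, extracting $A' \subseteq A$ with $|A'| \geq \alpha^{\epsilon/4}|A|^{1-\epsilon k^2}$ and with $|l\mathscr{A'}|$ polynomially controlled for every $l$. Before invoking Theorem \ref{vmvtsp} I verify that $A'$ inherits its hypotheses: well-spacedness is immediate from $A' \subseteq A$; the assumption $\log\alpha^{-1} \leq \epsilon k(k+1)\log|A|/43200$ combined with $|A'| \geq \alpha^{\epsilon/4}|A|^{1-\epsilon k^2}$ yields $|A'| \geq |A|^{1 - \epsilon k^2 - O(\epsilon^2)}$, so $|A'|$ is still large; and $X_{A'} \leq X_A \leq |A|^m \leq |A'|^{m'}$ for some $m'$ depending only on $m,\epsilon,k$.

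Next I apply Theorem \ref{vmvtsp} to $A'$ with the parameter $s$ specialised to $s_{0} := k(k+1)/2$ (and with an auxiliary small parameter that I absorb into $\epsilon$), obtaining
\[
|A'|^{k(k+1) - O(\epsilon)} \ \ll \ |s_{0}\mathscr{A'} + s_{0}\mathscr{A'}| \cdot |A'/A'|^{s_{0} - 1/2 - O(\epsilon)}.
\]
Since $s_{0}\mathscr{A'} + s_{0}\mathscr{A'} = 2s_{0}\mathscr{A'}$ and $2s_{0} = k(k+1)$, the $l = k(k+1)$ instance of the sumset bound from Theorem \ref{bsgvmvt} controls the first factor on the right. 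Rearranging, and noting that the leading exponent of $|A'|$ equals $s_{0}/(s_{0} - 1/2) = 1 + 1/(k(k+1)-1) = 1+\delta$, I obtain $|A'/A'| \gg \alpha^{O(1)} |A'|^{1 + \delta - O(\epsilon k^{2})}$, where the $O(\epsilon k^2)$ arises because $s_0$ and $2s_0+k^2$ are both $O(k^2)$ while $s_0-1/2$ is also $O(k^2)$.

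Finally I pass from $A'$ back to $A$: using $|A/A| \geq |A'/A'|$, substituting $|A'| \geq \alpha^{\epsilon/4}|A|^{1-\epsilon k^2}$, and using the hypothesis on $\log\alpha^{-1}$ to convert the remaining powers of $\alpha$ into a negligible loss in the exponent of $|A|$ produces $|A/A| \gg |A|^{1 + \delta - \epsilon k^2}$ after a cosmetic rescaling of $\epsilon$ to absorb the implicit constants. For the product-set bound I apply Pl\"unnecke--Ruzsa (Lemma \ref{prineq}) in the multiplicative group $\mathbb{R} \setminus \{0\}$ in the form $|A/A|/|A| \leq (|AA|/|A|)^2$, which combined with the $|A/A|$ estimate gives $|AA| \gg |A|^{1 + \delta/2 - 4\epsilon k^2}$. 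The main obstacle is bookkeeping: every stage (Theorem \ref{bsgvmvt}, Theorem \ref{vmvtsp}, the transfer from $A'$ to $A$, and the $\alpha$-to-$|A|$ conversion) inserts an $\epsilon$-sized distortion in an exponent, and one must choose the auxiliary small parameters carefully enough that the cumulative error fits within the advertised $\epsilon k^2$ and $4\epsilon k^2$ losses.
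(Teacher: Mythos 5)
Your proposal is correct and follows essentially the same route the paper intends: apply Theorem \ref{bsgvmvt} to obtain $A'$, feed $A'$ into Theorem \ref{vmvtsp} with $s = k(k+1)/2$ using the $l = k(k+1)$ case of the iterated-sumset bound to control $|s\mathscr{A'} + s\mathscr{A'}|$, transfer the resulting $|A'/A'|$ lower bound back to $|A/A|$, and then apply Pl\"unnecke--Ruzsa multiplicatively for $|AA|$. This mirrors the paper's own sketch in the paragraphs preceding the theorem statement, including the observation that $\delta = 1/(2s_0 - 1) = 1/(k(k+1)-1)$ and the step $|A/A|/|A| \leq (|AA|/|A|)^2$, and you correctly flag the remaining work as exponent bookkeeping with a rescaling of $\epsilon$.
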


Theorem $\ref{vmvtsp}$ and Theorem $\ref{main}$ roughly state that the Vinogradov system of equations controls multiplicative properties of a well-spaced set. Theorem $\ref{vmvtsp}$ controls $|2s\mathscr{A}|$ and $|A/A|$ simultaneously, while Theorem $\ref{main}$ assumes $J_{s,k}(A)$ to be extremally large, and then provides stronger bounds for $|A/A|$ and $|AA|$. The latter is a generalisation of the \enquote{few sums, many products} phenomenon. We note that when $k=1$ and $s=2$, a result of Elekes and Ruzsa \cite{ER2003}, along with a straightforward application of the Balog-Szemer\'edi-Gowers theorem, implies that 
\[ |AA| \gg |A|^{2 - \epsilon} \ \text{and} \ |A/A| \gg |A|^{2 - \epsilon}, \]
is permissible in Theorem $\ref{main}$. Thus, Theorem $\ref{vmvtsp}$ and Theorem $\ref{main}$ can be seen as higher dimensional analogues of different types of sum-product phenomenon.
\par

A key component in our proofs of Theorem $\ref{bsgvmvt}$ and Theorem $\ref{vmvtsp}$ will be suitable estimates for $J_{s,k}(A)$. In the following section, we will show that
\begin{equation} \label{lowbd}  J_{s,k}(A) \gg |A|^s + \frac{1}{|s\mathscr{A}|} |A|^{2s}. \end{equation}
From $\eqref{lowbd}$, we can see that when $A$ is an arithmetic progression, we have
$$ J_{s,k}(A) \gg |A|^s +  |A|^{2s - k(k+1)/2}.$$

As for upper bounds, proving almost sharp estimates for $J_{s,k}(A)$ had been a difficult open problem till recently. Bourgain, Demeter and Guth \cite{BDL2016} first proved suitably  sharp estimates for $J_{s,k}(A)$ and later, Wooley \cite{Woo2017} showed similar results using a different approach. In \S2, we will deduce the following upper bound as a straightforward consequence of Bourgain, Demeter and Guth's results from \cite{BDL2016}.

\begin{theorem} \label{BDG1}
For all $\epsilon >0$ and natural numbers $s$ and $k$, the following holds for all large enough, well-spaced subsets $A$ of real numbers
\[  J_{s,k}(A) \  \leq \ X_A^{\epsilon} (|A|^{s} + |A|^{2s - k(k+1)/2}). \]
\end{theorem}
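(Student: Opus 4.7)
The plan is to derive Theorem \ref{BDG1} from the Bourgain--Demeter--Guth $\ell^2$-decoupling theorem \cite{BDL2016} for the moment curve $\gamma(t) = (t, t^2, \ldots, t^k)$. After a harmless translation I assume $A \subseteq [0, X_A]$, and set $A' = \{a/X_A : a \in A\} \subseteq [0,1]$. Since the $j$-th Vinogradov equation is homogeneous of degree $j$, we have $J_{s,k}(A) = J_{s,k}(A')$, while the well-spaced hypothesis becomes $\delta$-separation of $A'$ with $\delta = 1/X_A$. The first step is to bridge from the combinatorial count to an analytic integral. Writing $g(\vec{\beta}) = \sum_{a \in A'} e(\vec{\beta} \cdot \gamma(a))$ and $R = \delta^{-k} = X_A^k$, and fixing a non-negative Schwartz bump $\phi$ on $\mathbb{R}^k$ with $\phi(0) = 1$ and Fourier transform supported on the unit ball, Fourier inversion yields
\[ R^{-k}\int_{B_R}|g(\vec{\beta})|^{2s}\,d\vec{\beta} \gg \sum_{\vec{a},\vec{b} \in (A')^s}\phi\Bigl(R\bigl({\textstyle\sum_i\gamma(a_i) - \sum_i\gamma(b_i)}\bigr)\Bigr) \geq J_{s,k}(A), \]
since each exact Vinogradov solution contributes $\phi(0) = 1$ to the middle sum.

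The second step is the application of the decoupling theorem at the critical exponent $p = k(k+1)$. Partitioning $[0,1]$ into arcs of length $\delta$, the $\delta$-separation of $A'$ guarantees at most one point per arc, so each non-trivial arc contribution is a single plane wave of unit amplitude with $L^p(w_{B_R})$-norm $\ll R^{k/p}$. Decoupling gives
\[ \|g\|_{L^p(B_R)} \ll R^\epsilon\Bigl(\sum_J\|g_J\|_{L^p(w_{B_R})}^2\Bigr)^{1/2} \ll R^\epsilon|A|^{1/2}R^{k/p}, \]
so that $\int_{B_R}|g|^p \ll R^{p\epsilon}|A|^{p/2}R^k$. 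Combining with the easy $L^2$-estimate $\int_{B_R}|g|^2 \ll |A|\,R^k$ (from $\delta$-separation and the rapid decay of the Fourier transform of $\chi_{B_R}$) via log-convexity of $L^{2s}$-norms for $2s \leq k(k+1)$, or with the trivial $\|g\|_\infty \leq |A|$ for $2s \geq k(k+1)$, one arrives at
\[ \int_{B_R}|g|^{2s} \ll R^{2s\epsilon}\bigl(|A|^s R^k + |A|^{2s}R^{k - k(k+1)/2}\bigr). \]
Dividing by $R^k$ and substituting $R = X_A^k$ produces the claimed bound $X_A^\epsilon(|A|^s + |A|^{2s - k(k+1)/2})$ after renaming $\epsilon$.

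The hardest step is the first one: converting the purely combinatorial count $J_{s,k}(A)$ into an $L^{2s}$-integral amenable to decoupling. For integer sets the familiar orthogonality $J_{s,k}(A) = \int_{[0,1]^k}|f_A|^{2s}$ provides this bridge automatically, but for a real well-spaced set that identity fails, and one must instead smooth against a dual bump on the ball $B_R$ with $R = X_A^k$ and verify that the ``fuzzy'' solutions counted by the integral (inputs whose moments agree only to precision $R^{-1}$) do not exceed the exact Vinogradov count by more than a bounded factor. Once that bridge is built, the rest of the argument is a direct application of decoupling followed by routine interpolation.
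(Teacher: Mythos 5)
Your proposal is essentially the same as the paper's proof: shift/rescale so the phases live where decoupling applies, smooth the sharp count $J_{s,k}(A)$ against a Schwartz weight so it is dominated by an $L^{2s}$-moment of an exponential sum over the moment curve, apply the Bourgain--Demeter--Guth decoupling theorem (or its corollary for well-spaced points) to bound that moment by $X_A^\epsilon(|A|^s + |A|^{2s-k(k+1)/2})$, and handle $s > k(k+1)/2$ via the $L^\infty$ bound $|g|^{2s} \le |A|^{2s-k(k+1)}|g|^{k(k+1)}$. The paper phrases the bridge using a weight $\phi_P$ majorised by $w_{B_R}$ and uses positivity of $\widehat{\phi}$; your version uses a dual bump $W$ supported on $B_R$ whose Fourier transform is the non-negative $\phi$. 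These are equivalent up to which side of the Fourier transform one calls $\phi$.

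The one genuine issue is in your closing paragraph, which mischaracterises the remaining work. You write that after smoothing one must ``verify that the fuzzy solutions counted by the integral do not exceed the exact Vinogradov count by more than a bounded factor.'' No such verification is needed, and indeed it would be false in general. Because $\phi \ge 0$ and $\phi(0)\ge 1$, you only obtain a \emph{one-sided} inequality
\[
J_{s,k}(A) \ \le \ \sum_{\vec a,\vec b\in (A')^s}\phi\bigl(R\,\textstyle(\sum_i\gamma(a_i)-\sum_i\gamma(b_i))\bigr) \ = \ \int_{\mathbb{R}^k} W(\beta)\,|g(\beta)|^{2s}\,d\beta,
\]
where $W(\beta) = R^{-k}\widehat{\phi}(\beta/R)$ is supported in $B_R$ with $\|W\|_\infty \lesssim R^{-k}$. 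The fuzzy solutions contribute only \emph{additional} non-negative terms to the right-hand side, and that is harmless: the decoupling theorem then bounds the integral (with weight $w_{B_R}$, which dominates $R^k W$) from above by $R^{O(\epsilon)}(|A|^s R^k + |A|^{2s}R^{k-k(k+1)/2})$, and this directly closes the argument. There is no need for a two-sided comparison between exact and approximate solutions. This is precisely the point of choosing the smoothing weight so that the function landing at the mixed differences after Fourier inversion is everywhere non-negative, which is the paper's role for $\widehat{\phi}$; your display already has the inequality in the correct direction, so the claimed gap is not really there, but it should be stated as a one-sided majorisation rather than as an unresolved comparison of fuzzy vs.~exact counts.
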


Now as in Theorem $\ref{bsgvmvt}$, suppose we are also given that the diameter $X_A$ of $A$ is bounded above by an expression that is polynomial in $|A|$. Thus, say there exists a fixed real number $m \geq1$ such that $X_A \leq |A|^m$. In this case, the above upper bound gives us 
\begin{equation}\label{eq:uppbd1} J_{s,k}(A) \  \leq \ {|A|}^{\epsilon} ({|A|}^{s} + {|A|}^{2s - k(k+1)/2}), \end{equation}
upon appropriately rescaling $\epsilon$. Furthermore, it can be seen, via an application of the Cauchy-Schwarz inequality, that 
$$ J_{s,k}(A') \ | s \mathscr{A'}| \ \geq \ |A'|^{2s}.   $$
Using $\eqref{eq:uppbd1}$ with this lower bound, we see that for all $\epsilon > 0$, natural numbers $l$, $k$ with $l \geq k(k+1)/2$, and sets $A'$ as in Theorem $\ref{bsgvmvt}$, one has 
$$ | l \mathscr{A'}| \ \geq \ |A'|^{\frac{k(k+1)}{2} - \epsilon}.$$
But for such sets $A'$, it follows from Theorem $\ref{bsgvmvt}$ that
$$|l\mathscr{A'}| \ \leq \ |A'|^{\frac{k(k+1)}{2} (1 + 2\epsilon (l+k^2))}.  $$
Thus our upper bound for $| l \mathscr{A'}|$ is almost sharp. \par

We further remark that the condition $X_A \leq |A|^m$ for a fixed real number $m$ as $A$ is allowed to grow sufficiently large is used to ensure that 
\[ J_{s,k}(A) \leq X_A^{\epsilon} |A|^{2s - k(k+1)/2} \leq |A|^{\epsilon m} |A|^{2s - k(k+1)/2}, \]
for $s \geq k(k+1)/2$. The second upper bound is what we require for our proof to work, and thus improvements in upper bounds for $J_{s,k}(A)$ will simultaneously improve our results by removing the constraint on sparsity of $A$. In particular, one might conjecture that the following stronger upper bound for $J_{s,k}(A)$ should hold. 

\begin{Conjecture} \label{demcon}
Let $s, k \in \mathbb{N}$ such that $s \geq k(k+1)/2$, and let $\epsilon > 0$ be a positive real number. Then for all finite, non-empty sets $A$ of $\mathbb{Z}$ such that $|A|$ is large enough, we have
\[ J_{s,k}(A) \leq |A|^{2s - k(k+1)/2 + \epsilon}. \]
\end{Conjecture}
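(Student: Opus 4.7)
The plan is to first reduce to the critical case $s = k(k+1)/2$, since the conjectured bound for larger $s$ follows automatically. Denoting by $r_{s,k}(\vec{n})$ the number of representations of $\vec{n} \in \mathbb{R}^k$ as $\sum_{i=1}^{s}(a_i, a_i^2, \ldots, a_i^k)$ with $a_i \in A$, one has the identity $r_{s+1,k}(\vec{n}) = \sum_{a \in A} r_{s,k}(\vec{n} - (a, a^2, \ldots, a^k))$. Applying Cauchy-Schwarz in $a$ and then summing over $\vec{n}$ yields $J_{s+1,k}(A) \leq |A|^2 J_{s,k}(A)$, and iterating gives $J_{s,k}(A) \leq |A|^{2s - k(k+1)} J_{k(k+1)/2,k}(A)$. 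It therefore suffices to prove $J_{k(k+1)/2,k}(A) \leq |A|^{k(k+1)/2 + \epsilon}$ for all sufficiently large integer sets $A$.

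For this critical case, the most natural strategy is to adapt Wooley's nested efficient congruencing, or equivalently the $\ell^2$ decoupling machinery of Bourgain, Demeter and Guth, so that the parameter $|A|$ assumes the role played by the interval length $N$ in the classical setting. One would fix a prime $p$ that is polynomial in $|A|$, iteratively partition $A$ into residue classes modulo successive powers of $p$, and at each stage pigeonhole onto a single dominant class. The standard congruencing step, which relates solutions at a given scale $p^\ell$ to solutions at scale $p^{\ell-1}$ after conditioning on top-degree variables, should in principle deliver the required bound provided each pigeonholing incurs only a factor of $|A|^{o(1)}$ and the recursion closes in terms of $\log|A|$ rather than $\log X_A$ many steps.

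The main obstacle is the absence of $p$-adic regularity in a generic integer set. For $A = \{1, \ldots, N\}$, classical efficient congruencing is driven by the near-perfect equidistribution of $A$ across every residue class modulo any $p^\ell \leq N$, which is precisely what guarantees cheap density loss at each congruencing step; by contrast a sparse subset of $[1,N]$ may be concentrated in very few classes, and passing to these classes repeatedly could accumulate a loss of $|A|^{\Omega(1)}$ over the course of the iteration. Any successful attack therefore appears to require either a sparse-friendly refinement of $\ell^2$ decoupling, which is genuinely out of reach with current technology, or a structural dichotomy asserting that integer sets with poor $p$-adic distribution possess enough additive-combinatorial structure for an alternative bound to hold. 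Resolving Conjecture \ref{demcon} would, via the reasoning outlined at the end of the excerpt, remove the polynomial-diameter constraint $X_A \leq |A|^m$ from Theorems \ref{bsgvmvt}, \ref{vmvtsp} and \ref{main}, since this hypothesis enters those arguments only through the factor $X_A^\epsilon$ in Theorem \ref{BDG1}.
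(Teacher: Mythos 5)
The statement you are addressing is labelled a \emph{Conjecture} in the paper, and the paper offers no proof of it: the author states it as an open problem (with the $k=2$, $s=3$ case attributed to a question of Demeter) and uses it only as motivation. There is therefore no argument in the paper for you to match, and any honest response must acknowledge that the result is not currently known. You do exactly this, so your submission is not a failed proof so much as a correct assessment of the problem's status.

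Your opening reduction is sound. Writing $r_{s+1,k}(\vec{n}) = \sum_{a\in A} r_{s,k}\bigl(\vec{n} - (a,a^2,\dots,a^k)\bigr)$ and applying Cauchy--Schwarz gives $J_{s+1,k}(A) \le |A|^2 J_{s,k}(A)$, and iterating cleanly reduces the conjecture to the critical exponent $s = k(k+1)/2$; this is a legitimate, standard simplification worth recording. Your subsequent diagnosis of the obstruction is also accurate and matches the paper's own remarks: the available upper bound (Theorem \ref{BDG1}) carries a factor $X_A^\epsilon$ precisely because the decoupling input of Bourgain--Demeter--Guth (Lemma \ref{BDG2}) is formulated over a ball of radius comparable to $X_A^k$, not $|A|^k$, and the efficient-congruencing alternative is driven by equidistribution of $\{1,\dots,N\}$ in residue classes modulo $p^\ell$, which a sparse subset of $[1,X_A]$ need not enjoy. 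Your final observation, that a proof of the conjecture would remove the hypothesis $X_A \le |A|^m$ from Theorems \ref{bsgvmvt}, \ref{vmvtsp} and \ref{main}, again agrees with the paper's own discussion (this is exactly the role of the quantity $\Lambda$ introduced in \S6, where $\Lambda = 0$ would be the conclusion of Conjecture \ref{demcon}). In short: no proof exists here, none exists in the paper, and you have correctly identified both the reduction to the critical case and the genuine technical barrier.
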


When $k = 2, s = 3$, a version of this conjecture appears in \cite[Question 8.6]{De2014}. In the same paper \cite[Proposition 8.7]{De2014}, Demeter shows that for all $\epsilon > 0$, we have
\[ J_{3,2}(A) \leq |A|^{7/2 + \epsilon},\] 
for all large enough sets $A$ of $\mathbb{R}$. In \S6, we establish variants of Theorem $\ref{bsgvmvt}$ and Theorem $\ref{main}$ that do not require the diameter restriction $X_A \leq |A|^m$ for our set $A$, but require a hypothesis different from $J_{s,k}(A) = \alpha |A|^{2s-k(k+1)/2}$.   \par

As previously mentioned, Theorem $\ref{bsgvmvt}$ can be seen as a higher degree analogue of the Balog-Szemer\'edi-Gowers theorem. In fact, the latter essentially works with sets that have many solutions to the linear case $k=1, s=2$ of the Vinogradov system. It was originally proved by Balog and Szemer\'edi \cite{BS1994}, and further developed by Gowers \cite{Gow1998} in his celebrated work on four term arithmetic progressions in subsets of integers. Since Gowers' work, many authors have further refined this result, with the most recent improvement being due to Schoen \cite{Sch2015}. Moreover, this result has also been generalised for the case of many-fold sumsets. This includes results by Sudakov, Szemer\'edi and Vu \cite{SSV2005}, and Borenstein and Croot \cite{BC2011}. We will utilise ideas from Borenstein and Croot's paper along with appropriate estimates for $J_{s,k}(A)$ to establish Theorem \ref{bsgvmvt}. \par

We now outline the structure of our paper. We use \S2 to state some important definitions and preliminary results that we will use throughout our proof. We begin the proof of Theorem $\ref{bsgvmvt}$ in \S3, where we employ the hypothesis of Theorem $\ref{bsgvmvt}$ to construct a dense hypergraph on $\mathscr{A}^s$ which has a small restricted sumset. We then state Lemma $\ref{hypvmvt}$, a result about hypergraphs with desirable additive properties, from which we deduce our main result. Subsequently, \S4 is dedicated to proving Lemma $\ref{hypvmvt}$. In \S5, we prove Theorem $\ref{vmvtsp}$. Lastly, we use \S6 to establish Theorem $\ref{absvmvt}$ and Theorem $\ref{absmain}$, which are variants of our earlier results but with different hypotheses.  \par

{\textbf{Acknowledgements.}} The author's work is supported by a studentship sponsored by a European Research Council Advanced Grant under the European Union's Horizon 2020 research and innovation programme via grant agreement No.~695223. The author is grateful for support and hospitality from the University of Bristol. The author would like to thank Trevor Wooley for his guidance and direction, and the anonymous referee for many helpful comments.  

\section{Preliminaries} \label{prelim}

As before, given a finite subset $A$ of an additive group $G$, we define the sumset $A+A$ as
$$ A+A = \{ a_1 + a_2 \ | \  a_1, a_2 \in A \}, $$
and the difference set $A-A$ as
$$ A-A = \{ a_1 - a_2 \ | \  a_1, a_2 \in A \}. $$
Furthermore, for a given natural number $s$, we use $A^s = A \times A \times \dots \times A$ to denote the Cartesian-product of $s$ copies of $A$. Given $G \subseteq A^s$, we define the restricted sumset $\Sigma(G)$ of $G$ as 
\begin{equation} \label{sigdef}
 \Sigma(G) = \{ a_1 + a_2 + \dots + a_s  \ | \ (a_1,a_2,\dots,a_s) \in G \}.
 \end{equation}
As a special case of the restricted sumset, the iterated sumset $sA$ is defined to be  $\Sigma(A^s)$. Given $A,B \subseteq G$, we let $E(A,B)$ denote the number of solutions to 
$$x_1 + y_1 = x_2 + y_2,$$
with $x_1, x_2 \in A$ and $y_1,y_2 \in B$. \par

In \S4, we will use the following consequence of Balog's refinement \cite[Theorem 5]{Bal2007} of the Balog-Szemer\'edi-Gowers theorem.

\begin{theorem} \label{bsg}
Let $A$ and $B$ be finite subsets of an additive abelian group $G$ with $|A|= |B|$ and $E(A,B) \geq \alpha |A|^3$.  Then there exists $A' \subseteq A$ such that 
\[     |A'| \ \geq  C_1(\alpha) |A| \ \ \text{and} \  \ |A' + A'| \ \leq \  C_2(\alpha)  |A|,   \]
where \[    C_1(\alpha) \ =  \   \frac{3}{2^{19}} \frac{{\alpha}^{3}}{\log{(32/ {\alpha})}} \ \ \text{and} \ \  C_2(\alpha) \ =   \   \frac{2^{45}}{3}  \frac{\log{(32/ {\alpha})}}{{\alpha}^7}. \]
\end{theorem}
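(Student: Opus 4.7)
The plan is to deduce Theorem \ref{bsg} directly from Balog's refinement of the Balog--Szemer\'edi--Gowers theorem recorded in \cite[Theorem 5]{Bal2007}, which supplies precisely the polylogarithmic dependence on $\alpha$ displayed in our statement. The result there is naturally stated in the symmetric setting, with hypothesis $E(A) \geq \beta |A|^3$, so the first task is to convert our mixed-energy assumption $E(A,B) \geq \alpha|A|^3$ into a lower bound for $E(A)$ alone.

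For this reduction I would apply the Cauchy--Schwarz inequality to the standard representation
\[
E(A,B) = \sum_{d} r_{A-A}(d) \, r_{B-B}(-d),
\]
which yields $E(A,B)^2 \leq E(A) \, E(B)$. Combining this with the trivial estimate $E(B) \leq |B|^3 = |A|^3$ forces $E(A) \geq \alpha^2 |A|^3$, so we may feed $A$ into Balog's theorem with parameter $\beta = \alpha^2$. Its conclusion produces $A' \subseteq A$ with $|A'| \geq c_1(\beta)|A|$ and $|A' + A'| \leq c_2(\beta)|A|$, where the functions $c_1$ and $c_2$ carry the explicit rational and logarithmic factors that Balog extracts from a refined version of Gowers' graph-cleaning and path-counting scheme.

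The remaining obstacle is entirely administrative: substituting $\beta = \alpha^2$ and confirming that the resulting prefactors agree with $C_1(\alpha) = (3/2^{19}) \alpha^3 / \log(32/\alpha)$ and $C_2(\alpha) = (2^{45}/3) \log(32/\alpha) / \alpha^7$, with the shifted logarithm $\log(32/\alpha)$ absorbing harmless constants in the passage $\alpha \mapsto \alpha^2$. No new combinatorial content is required beyond the cited theorem; the argument is essentially a one-step Cauchy--Schwarz reduction followed by invocation of \cite[Theorem 5]{Bal2007}, and the main care needed is in aligning the explicit constants.
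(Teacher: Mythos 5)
The central step of your argument---the Cauchy--Schwarz reduction from $E(A,B)$ to $E(A)$---rests on a misreading of the cited source. Balog's Theorem 5 in \cite{Bal2007} is already stated in the asymmetric two-set form: given $|A|=|B|=N$ and $E(A,B)\geq \alpha N^3$, it directly produces $A'\subseteq A$ and $B'\subseteq B$ with density and sumset bounds carrying the quoted dependence on $\alpha$. There is no symmetric-hypothesis version to reduce to, and the asymmetric formulation is precisely the content of Balog's refinement. Beyond the misattribution, the reduction you propose cannot recover the stated constants: from $E(A,B)^2\leq E(A)E(B)$ and $E(B)\leq |A|^3$ you would only obtain $E(A)\geq \alpha^2|A|^3$, so you would be invoking a BSG-type theorem at parameter $\beta=\alpha^2$. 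Since $C_1(\alpha)$ and $C_2(\alpha)$ scale like $\alpha^3/\log(32/\alpha)$ and $\alpha^{-7}\log(32/\alpha)$, replacing $\alpha$ by $\alpha^2$ would essentially double those exponents to $\alpha^6$ and $\alpha^{-14}$. That is not an administrative discrepancy that a shift inside the logarithm can absorb; it is a genuine loss of a power of $\alpha$, and it means the constants in the statement would not be met.

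What the paper actually does is quote Balog's asymmetric Theorem 5 essentially verbatim, suppressing the companion subset $B'\subseteq B$, with at most a single Ruzsa-triangle or Pl{\"u}nnecke-Ruzsa step to pass from Balog's control of $|A'+B'|$ (or $|A'-B'|$) to the stated bound on $|A'+A'|$. If you want to reconstruct the deduction, the right move is to cite Balog's theorem in its two-set form and then carry out that one sumset manipulation---not to symmetrise the energy hypothesis and pay a square in $\alpha$.
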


We will now assume that $A$ is a finite subset of real numbers. In this note, we will look at Vinogradov's mean value system, that is, the system of equations
\begin{equation}\label{eq:def1}
\sum_{i=1}^{s} x_i^j  = \sum_{i=1}^{s} y_i^j \ \ (1 \leq j \leq k), 
\end{equation}
where $s$ and $k$ are some natural numbers. We define $J_{s,k}(A)$ as the number of solutions to the above system of equations with variables $x_i, y_i$ restricted to the set $A$ for all $1 \leq i \leq s$. Using $\mathscr{A}$ to denote the set 
$$\mathscr{A} = \{ (a,a^2,\dots, a^k) \ | \ a \in A\} \subseteq \mathbb{R}^k, $$ 
it follows that 
\[ J_{s,k}(A) =  \big| \big\{  (a_1,a_2,\dots,a_{2s}) \in {\mathscr{A}}^{2s} \ \big| \   \sum_{i=1}^{s} a_i  = \sum_{i=s+1}^{2s} a_i \   \big\}\big|. \] 
Note that $|\mathscr{A}| = |A|$,  where for a set $X$, we write $|X|$ for the cardinality of $X$. For all $\vec{n} = (n_1,\dots,n_k)$ in $\mathbb{R}^k$, we define $r(\mathscr{A}^s; \vec{n})$ as 
$$ r(\mathscr{A}^s;\vec{n}) = \bigg| \bigg\{ (a_1,a_2,\dots,a_s) \in \mathscr{A}^s \ \big| \ \vec{n} = \sum_{i=1}^{s} a_i \bigg\}\bigg| , $$
and 
$$ r(\mathscr{A}^s) = \sup_{ \vec{n}\in s\mathscr{A} }  r(\mathscr{A}^s;\vec{n}) .$$
\par

We now show that the lower bound $\eqref{lowbd}$ holds for $J_{s,k}(A)$. 
\begin{lemma}
Let $A$ be a finite set of real numbers. Then we have
\[  J_{s,k}(A) \gg |A|^s + \frac{1}{|s\mathscr{A}|} |A|^{2s}. \]
\end{lemma}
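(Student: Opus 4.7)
The plan is to establish the two lower bounds on $J_{s,k}(A)$ separately and then combine them, using the fact that a sum of two nonnegative quantities is controlled by twice their maximum.

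For the first bound $J_{s,k}(A) \gg |A|^s$, I would simply use the diagonal contribution: any tuple $(a_1,\dots,a_s) \in \mathscr{A}^s$ yields a solution to the system by setting $a_{s+i} = a_i$ for $1 \leq i \leq s$. This gives at least $|\mathscr{A}|^s = |A|^s$ solutions, so $J_{s,k}(A) \geq |A|^s$.

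For the second bound, I would reinterpret $J_{s,k}(A)$ via the representation function $r(\mathscr{A}^s;\vec{n})$ introduced above. Since a solution to $\eqref{eq:def1}$ corresponds to two $s$-tuples in $\mathscr{A}$ with the same sum in $\mathbb{R}^k$, we have
\[ J_{s,k}(A) = \sum_{\vec{n} \in s\mathscr{A}} r(\mathscr{A}^s;\vec{n})^2. \]
Observing further that $\sum_{\vec{n} \in s\mathscr{A}} r(\mathscr{A}^s;\vec{n}) = |\mathscr{A}|^s = |A|^s$, an application of the Cauchy-Schwarz inequality yields
\[ |A|^{2s} = \Bigl( \sum_{\vec{n} \in s\mathscr{A}} r(\mathscr{A}^s;\vec{n}) \Bigr)^2 \leq |s\mathscr{A}| \sum_{\vec{n} \in s\mathscr{A}} r(\mathscr{A}^s;\vec{n})^2 = |s\mathscr{A}| \, J_{s,k}(A), \]
whence $J_{s,k}(A) \geq |A|^{2s}/|s\mathscr{A}|$.

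Combining the two bounds, we obtain $2 J_{s,k}(A) \geq |A|^s + |A|^{2s}/|s\mathscr{A}|$, which is the desired inequality with an absolute implied constant. I do not anticipate any obstacle here, as both ingredients are standard; the only genuine content is rewriting $J_{s,k}$ in terms of the representation function to make the Cauchy-Schwarz step available.
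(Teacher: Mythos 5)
Your proposal is correct and takes essentially the same approach as the paper: the diagonal solutions give $J_{s,k}(A)\geq|A|^s$, and rewriting $J_{s,k}(A)=\sum_{\vec{n}\in s\mathscr{A}}r(\mathscr{A}^s;\vec{n})^2$ followed by Cauchy-Schwarz gives the second bound. No discrepancies.
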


\begin{proof}
We first count the number of diagonal solutions to \eqref{eq:def1}, namely those solutions in which $\{ x_1,\dots,x_s \}$ is a permutation of $\{y_1, \dots ,y_s\}$. This gives us the lower bound
\begin{equation}\label{eq:lowbd1}  J_{s,k}(A) \geq |A|^s. \end{equation}
Next, we see that 
\[  \sum_{\vec{n} \in s\mathscr{A}}  r(\mathscr{A}^s;\vec{n}) = |\mathscr{A}^s| =  |\mathscr{A}|^s = |A|^s.\] 
Applying the Cauchy-Schwarz inequality on this, we get
\[  \sum_{\vec{n} \in s\mathscr{A}}  r(\mathscr{A}^s;\vec{n})^2  \sum_{\vec{n} \in s\mathscr{A}} 1 \geq   (\sum_{\vec{n} \in s\mathscr{A}}  r(\mathscr{A}^s;\vec{n}) )^2 = |A|^{2s}.  \] 
We note that
\[  \sum_{\vec{n} \in s\mathscr{A}}  r(\mathscr{A}^s;\vec{n})^2  = J_{s,k}(A), \] 
and thus, we have
\begin{equation} \label{eq:lowbd2}
J_{s,k}(A) \geq \frac{1}{|s \mathscr{A}|}|A|^{2s}.
\end{equation}
Combining $\eqref{eq:lowbd1}$ and $\eqref{eq:lowbd2}$, we prove our lemma. 
\end{proof}

From this point, we will assume that our set $A$ is well-spaced. With this assumption, we can now get suitable upper bounds of $J_{s,k}(A)$. We wish to prove Theorem $\ref{BDG1}$, which we deduce from the following consequence of work of Bourgain, Demeter and Guth \cite[Theorem 4.1]{BDL2016}.

\begin{lemma} \label{BDG2}
Let $s,k \in \mathbb{N}$ such that $2 \leq k$ and $2 \leq s \leq k(k+1)/2$. For each $1 \leq i \leq P $, let $t_i$ be a point in $(\frac{i-1}{P}, \frac{i}{P}]$. Then for each $\epsilon > 0$, each $P^k \leq R$, each ball $B_R = B(c,R) \in \mathbb{R}^k$ and each set of complex coefficients $ \{ \mathfrak{a_i} \}_{1 \leq i \leq P}$, we have
\[  \frac{1}{|B_R|} \int_{\mathbb{R}^k} | \sum_{i=1}^{P} \mathfrak{a_i} e(x_1 t_i + x_2 t_i^2 + \dots + x_k t_i^k) |^{2s} w_{B_R}(x) dx_1 \dots dx_k  \ll  P^{\epsilon} \big(\sum_{i =1}^{P} |\mathfrak{a_i}|^2 \big)^{s}, \]
where 
\[ w_{B_R}(x) = \bigg(  1 + \frac{|x-c|}{R} \bigg)^{-100k}. \]
\end{lemma}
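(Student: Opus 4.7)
The strategy is to derive the stated inequality as a direct specialization of the $\ell^2$ decoupling theorem for the moment curve, namely Theorem 4.1 of \cite{BDL2016}. For an interval $J \subseteq [0,1]$ and a Schwartz function $g$ on $[0,1]$, let
\[ E_J g(x) = \int_J g(t)\, e(x_1 t + x_2 t^2 + \dots + x_k t^k)\, dt. \]
The theorem asserts that whenever $2 \leq p \leq k(k+1)$, $R \geq 1$, and $P \geq 1$ satisfies $P^k \leq R$, one has
\[ \|E_{[0,1]} g\|_{L^p(w_{B_R})} \leq C_\epsilon P^\epsilon \bigg( \sum_{i=1}^P \|E_{I_i} g\|_{L^p(w_{B_R})}^2 \bigg)^{1/2}, \]
where $I_i = ((i-1)/P, i/P]$. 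The hypothesis $2 \leq s \leq k(k+1)/2$ ensures that $p = 2s$ lies in the admissible range $[4, k(k+1)]$.

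It then remains to specialize $g$ so that the extension $E_{[0,1]} g$ reproduces the target exponential sum. For small $\eta > 0$, I take
\[ g_\eta(t) = \sum_{i=1}^P \mathfrak{a_i}\, \eta^{-1}\, \mathbf{1}_{[t_i - \eta/2,\, t_i + \eta/2]}(t), \]
with $\eta$ small enough that the support intervals are pairwise disjoint and each is contained in its own $I_i$. After approximating $g_\eta$ by Schwartz functions, I apply the decoupling inequality to them. Pointwise, $E_{[0,1]} g_\eta(x) \to \sum_{i=1}^P \mathfrak{a_i}\, e(x_1 t_i + \dots + x_k t_i^k)$ and $E_{I_i} g_\eta(x) \to \mathfrak{a_i}\, e(x_1 t_i + \dots + x_k t_i^k)$ as $\eta \to 0$, with both expressions dominated in modulus, uniformly in $\eta$, by $\sum_i |\mathfrak{a_i}|$ and $|\mathfrak{a_i}|$ respectively; dominated convergence against the integrable weight $w_{B_R}$ allows us to pass to the limit in both sides of the inequality.

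Since $|e(x_1 t_i + \dots + x_k t_i^k)| = 1$, the limit yields $\|E_{I_i} g\|_{L^{2s}(w_{B_R})}^{2s} = |\mathfrak{a_i}|^{2s} \int w_{B_R}$, and the exponent $100k$ ensures $\int w_{B_R} \asymp |B_R|$, with implicit constants depending only on $k$. Raising the surviving decoupling inequality to the $2s$-th power, dividing through by $|B_R|$, and absorbing the factor $2s$ into a rescaled $\epsilon$ then yields the claim. The only non-trivial ingredient is the appeal to \cite[Theorem 4.1]{BDL2016}; everything else is a routine limiting and rescaling argument, with the main thing to verify being that Theorem 4.1 is stated in sufficient generality to allow the ball radius $R$ to exceed the natural scale $P^k$ (if not, one splits $B_R$ into smaller balls of radius $P^k$ and sums using standard weight-domination properties).
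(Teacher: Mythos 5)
The paper gives no proof of Lemma~\ref{BDG2}: it is quoted directly as Theorem~4.1 of Bourgain, Demeter and Guth \cite{BDL2016} (with cosmetic changes of notation) and then used as a black box in the proof of Theorem~\ref{BDG1}. There is therefore no in-paper argument to compare against. What you have written is a reconstruction of BDG's own passage from their continuous $\ell^2$ decoupling theorem for the moment curve to the exponential-sum form, which is precisely the derivation they (and Pierce, in the exposition \cite{LP2017}) carry out to obtain their Theorem~4.1.

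Your reconstruction is sound, and the reduction $p=2s\in[4,k(k+1)]$ is exactly what is needed. Two small bookkeeping remarks. First, since $t_i$ may sit at the right endpoint $i/P$ of $I_i$, the symmetric bump $[t_i-\eta/2,\,t_i+\eta/2]$ can protrude into $I_{i+1}$; a one-sided bump $[t_i-\eta,\,t_i]$ avoids this and makes the claimed containment in $I_i$ immediate. Second, the extension from balls of radius exactly $P^k$ to all $R\ge P^k$ that you mention in passing is genuinely needed, since BDG state the decoupling at the natural scale; the standard covering-plus-weight-domination argument handles it, so this is not a gap, merely a step that deserves to be stated rather than left parenthetical. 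In short, the proposal is correct, but note that it supplies a proof of a result the paper treats as an external citation, rather than offering an alternative to an argument the paper itself contains.
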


We proceed by following the proof of Theorem $6.4$ in \cite{LP2017}, which is an exposition of the proof of Corollary $4.2$ in \cite{BDL2016}. 
\begin{proof}[Proof of Theorem $\ref{BDG1}$]
We first define some notation for ease of exposition. We use $P$ to denote $\ceil{X_A}$ and $S_A$ to denote the shifted set $A - \inf{A}$. We use $d\vec{x}$ to denote $dx_1\dots dx_k$ when we integrate over $\mathbb{R}^k$ and $\vec{x}$ to denote $(x_1, \dots, x_k) \in \mathbb{R}^k$. For all $\vec{x} \in \mathbb{R}^k$, write
\[ \mathfrak{f}(A; \vec{x}) =  \sum_{a \in S_A}  e\bigg(x_1 \frac{a}{P} + x_2 \frac{a^2}{P^2} + \dots + x_k \frac{a^k}{P^k}\bigg) , \] 
and
\[ \mathfrak{g}(A; \vec{x}) = \sum_{a \in S_A}  e(x_1 a + x_2 a^2 + \dots + x_k a^k). \]

We now show Theorem $\ref{BDG1}$ for $s \leq k(k+1)/2$. We begin by fixing a Schwartz class function $\phi : \mathbb{R}^k \to [0, \infty)$ that satisfies $\phi(\vec{x}) > 0$ for all $\vec{x} \in \mathbb{R}^k$ and whose Fourier transform $\hat{\phi}$ satisfies $\hat{\phi}(\vec{\xi}) > 0$ for all $\vec{\xi} \in \mathbb{R}^k$ and $\hat{\phi}(\vec{\xi}) \geq 1$ for $|\vec{\xi}| \leq 1$. Given such a function $\phi$, we define for any $U>0$, the functions
 \[ \phi_U(\vec{x}) = \phi(x_1 U^{-k}, \dots, x_k U^{-k}) \]
 and
 \[ \psi_U(\vec{x}) = \phi(x_1 U^{-(k-1)}, x_2 U^{ -(k-2)}, \dots, x_k),\]
 for all $\vec{x} \in \mathbb{R}^k$. We observe that for all $U > 0$ and $\vec{x} \in \mathbb{R}^k$, we have
 \[ \psi_U(x_1, \dots, x_k)  = \phi_U(x_1U, \dots, x_k U^k). \]
 
 Due to the rapid decay of our Schwartz function $\phi$, given $P \geq 1$, there exists a ball $B_R$ of radius $P^k \ll R \ll P^k$, such that 
\[ \phi_{P}(\vec{x}) \ll w_{B_R}(\vec{x}) \ \text{for all} \ \vec{x} \in \mathbb{R}^k. \]

As $A$ is well-spaced, for all $i \in \mathbb{Z}$, we have $|S_A \cap (i, i+1]| \leq 1$. Thus, for $0 \leq i \leq P$, if $S_A\cap (i, i+1] = \{a\}$, we set $t_i = a/P$ and  $ \mathfrak{a_i} = 1$, otherwise we set $t_i = (i+1)/P$ and $ \mathfrak{a_i} = 0$. We now use Lemma $\ref{BDG2}$ with $t_i$ and $\mathfrak{a_i}$ as defined, to get
\begin{equation} \label{decoup}
  \frac{1}{P^{k^2}} \int_{\mathbb{R}^k} | \mathfrak{f}(A;\vec{x}) |^{2s} \phi_{P}(\vec{x}) d \vec{x}  \ll  P^{\epsilon} |A|^{s}.
\end{equation}
We perform a change of variables on the left hand side to get
\[ \frac{1}{P^{k(k-1)/2}} \int_{\mathbb{R}^k} | \mathfrak{g}(A; \vec{x})|^{2s} \phi_{P}(Px_1, \dots, P^k x_k) d \vec{x}, \]
which is the same as
\begin{equation} \label{wutname}
 \frac{1}{P^{k(k-1)/2}} \int_{\mathbb{R}^k} |\mathfrak{g}(A; \vec{x}) |^{2s} \psi_{P}(\vec{x})  d \vec{x}. 
\end{equation}
We now expand the $2s$-moment in the above expression to get
\begin{equation} \label{decoup2}
 \frac{1}{P^{k(k-1)/2}}  \sum_{\vec{a} \in S_A^{2s}} \int_{\mathbb{R}^k} e(x_1 \theta_1(\vec{a}) + \dots + x_k \theta_k(\vec{a})) \psi_{P}(\vec{x}) d \vec{x}, 
\end{equation}
where 
\[ \vec{a} = (a_1, \dots, a_{2s}) \in S_A^{2s},\]
and
\[ \theta_j(\vec{a}) = \sum_{i=1}^{s} (a_i^{j} - a_{i+s}^{j}), \]
for all $1 \leq j \leq k$. \par

We note that for each fixed $\vec{a}$, the integral above is the Fourier transform of the function $\phi(\frac{x_1}{P^{k-1}}, \dots, x_k)$, evaluated at the point $(\theta_{1}(\vec{a}), \dots, \theta_{k}(\vec{a}))$. We further remark that
\[\hat{\psi_{P}}(\eta_1, \dots, \eta_k) = P^{k(k-1)/2}\hat{\phi}(P^{k-1} \eta_1, \dots, \eta_k),   \] 
and thus our integral expression in $\eqref{decoup2}$ is equivalent to
\begin{equation} \label{decoup3}
 \sum_{\vec{a} \in S_A^{2s}} \hat{\phi}(P^{k-1} \theta_{1}(\vec{a}), \dots, \theta_{k}(\vec{a})). 
\end{equation}

As per our choice of $\phi$, we note that $\hat{\phi}(\vec{\xi}) > 0$ for all $\vec{\xi} \in \mathbb{R}^k$ and $\hat{\phi}(\vec{\xi}) \geq 1$ for $|\vec{\xi}| \leq 1$. In particular, if $\vec{\xi}= (0 \dots, 0)$, then $\hat{\phi}(\vec{\xi}) \geq 1$. Lastly, we write
\[ T_A =  \{ \vec{a} \in S_A^{2s} \ | \ \theta_j(\vec{a}) = 0 \ \text{for all} \ 1 \leq j \leq k \}. \] 
We note that, by positivity of $\hat{\phi}$, we have
\begin{align*} 
\sum_{\vec{a} \in S_A^{2s}} \hat{\phi}(P^{k-1} \theta_{1}(\vec{a}), \dots, \theta_{k}(\vec{a})) & \geq \sum_{\vec{a} \in T_A} \hat{\phi}(P^{k-1} \theta_{1}(\vec{a}), \dots, \theta_{k}(\vec{a})) \\
& = \sum_{\vec{a} \in T_A} \hat{\phi}(0, \dots, 0) \\
& \geq  \sum_{\vec{a} \in T_A} 1 = J_{s,k}(S_A).
\end{align*}
We observe that the sum in $\eqref{decoup3}$ is an equivalent expression of the left hand side of $\eqref{decoup}$ and thus, we have
\begin{equation} \label{res1}
J_{s,k}(S_A) \leq  \sum_{\vec{a} \in S_A^{2s}} \hat{\phi}(P^{k-1} \theta_{1}(\vec{a}), \dots, \theta_{k}(\vec{a})) \ll P^{\epsilon} |A|^{s}.
\end{equation}
We conclude this portion of the proof by recalling that the Vinogradov system is translation invariant and thus, $J_{s,k}(S_A) = J_{s,k}(A)$. Hence, we have proved Theorem $\ref{BDG1}$ for $s = k(k+1)/2$. \par

We now assume $ s > k(k+1)/2$. We begin by noting that
\[ |\mathfrak{f}(A;\vec{x})| \leq |S_A| = |A|. \] 
Thus, we have
\begin{equation} \label{linft}
 |\mathfrak{f}(A;\vec{x})|^{2s} \leq |A|^{2s - k(k+1)} |\mathfrak{f}(A;\vec{x})|^{k(k+1)} .
\end{equation}
As in the proof for the $s \leq k(k+1)/2$ case, we deduce that
\[ J_{s,k}(A) \leq   \frac{1}{P^{k(k-1)/2}} \int_{\mathbb{R}^k} | \mathfrak{f}(A;\vec{x}) |^{2s} \psi_{P}(\vec{x}) d \vec{x}.\]
We combine this with $\eqref{decoup}$ and $\eqref{linft}$ to get
\begin{equation} \label{res2}
J_{s,k}(A)  \ll P^{\epsilon} |A|^{2s - k(k+1)/2}.  
\end{equation}

We conclude by mentioning that as $A$ is well-spaced, $P \geq |A|/10$, and as $|A|$ becomes large, $P$ becomes large. Thus, we can replace the implicit constant in Vinogradov's notation in $\eqref{res1}$ and $\eqref{res2}$ by $P^{\epsilon}$. Consequently, we get Theorem $\ref{BDG1}$ by rescaling $\epsilon$ appropriately.   
\end{proof}

Lastly, we show the following upper bounds for $r(\mathscr{A}^s)$.

\begin{lemma}
\label{folklore}
For every $m \geq 1$, $\delta > 0$, $(s,k) \in \mathbb{N}^2$ such that $s \geq k(k+1)$ and $k \geq 2$, the following holds. Suppose that $A$ is a finite, well-spaced subset of $\mathbb{R} \setminus \{0\}$ such that $|A|$ is sufficiently large and $ X_{A} \leq {|A|}^m$. Then, we have
\begin{equation} \label{rep} r(\mathscr{A}^s) \leq |A|^{s- \frac{k(k+1)}{2} + \delta}  \ .   \end{equation}
\end{lemma}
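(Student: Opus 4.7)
The plan is to exploit the fact that $r(\mathscr{A}^{s};\vec{n})$ can be expressed as a convolution of two representation functions of smaller order, and then bound each convolution factor by a Vinogradov mean value via Cauchy--Schwarz, invoking Theorem~\ref{BDG1} at the last step. Concretely, for any decomposition $s=s_{1}+s_{2}$ with $s_{1},s_{2}\in\mathbb{N}$ and any $\vec{n}\in \mathbb{R}^{k}$, one has the trivial identity
\[
r(\mathscr{A}^{s};\vec{n}) \;=\; \sum_{\vec{m}\in s_{1}\mathscr{A}} r(\mathscr{A}^{s_{1}};\vec{m})\, r(\mathscr{A}^{s_{2}};\vec{n}-\vec{m}),
\]
and an application of the Cauchy--Schwarz inequality, together with the observation that $\sum_{\vec{m}} r(\mathscr{A}^{t};\vec{m})^{2} = J_{t,k}(A)$ for every $t\in\mathbb{N}$, gives
\[
r(\mathscr{A}^{s};\vec{n})^{2} \;\leq\; J_{s_{1},k}(A)\, J_{s_{2},k}(A).
\]

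I would then take $s_{1}=\lfloor s/2\rfloor$ and $s_{2}=\lceil s/2\rceil$. Since $k(k+1)$ is always even and $s\geq k(k+1)$, both indices satisfy $s_{i}\geq k(k+1)/2$; in this regime the second term in Theorem~\ref{BDG1} dominates, so for any $\epsilon>0$ and $|A|$ sufficiently large one has
\[
J_{s_{i},k}(A) \;\leq\; X_{A}^{\epsilon}\bigl(|A|^{s_{i}} + |A|^{2s_{i}-k(k+1)/2}\bigr) \;\leq\; 2\, X_{A}^{\epsilon}\, |A|^{2s_{i}-k(k+1)/2}.
\]
Multiplying the two estimates and invoking the hypothesis $X_{A}\leq |A|^{m}$ yields
\[
r(\mathscr{A}^{s};\vec{n}) \;\leq\; 2\, X_{A}^{\epsilon}\, |A|^{s-k(k+1)/2} \;\leq\; 2\, |A|^{\,s-k(k+1)/2+m\epsilon}.
\]
Choosing $\epsilon = \delta/(2m)$ and taking $|A|$ large enough that the absolute constant $2$ is absorbed into a further factor of $|A|^{\delta/2}$, the desired bound follows after passing to the supremum over $\vec{n}\in s\mathscr{A}$.

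The only point that requires any care is the verification that Theorem~\ref{BDG1} is being applied in the regime where its estimate carries the full exponent saving of $k(k+1)/2$; this is precisely what the hypothesis $s\geq k(k+1)$ secures by forcing $s_{i}\geq k(k+1)/2$ in the split. No further structural input on $A$ beyond its well-spacedness and the polynomial diameter bound $X_{A}\leq|A|^{m}$ is needed, and an improved version of Theorem~\ref{BDG1} (as envisaged by Conjecture~\ref{demcon}) would automatically relax the diameter constraint here.
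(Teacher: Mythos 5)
Your proof is correct and takes a genuinely different route from the paper. The paper reduces to the case $s=k(k+1)$ and then reruns the Fourier-analytic argument from the proof of Theorem~\ref{BDG1}: it introduces the weighted integral $I=\frac{1}{P^{k(k-1)/2}}\int \mathfrak{g}(A;\vec{x})^{s}e(-\vec{x}\cdot\vec{n})\psi_{P}(\vec{x})\,d\vec{x}$, shows by positivity of $\hat{\phi}$ that $r(\mathscr{S}_A^s;\vec{n})\leq I$, and then applies the triangle inequality followed by the decoupling inequality of Lemma~\ref{BDG2} (with $s/2=k(k+1)/2$) to bound $I$. You instead treat Theorem~\ref{BDG1} as a black box: the convolution identity $r(\mathscr{A}^s;\vec{n})=\sum_{\vec{m}}r(\mathscr{A}^{s_1};\vec{m})\,r(\mathscr{A}^{s_2};\vec{n}-\vec{m})$, Cauchy--Schwarz, and the identity $\sum_{\vec{m}}r(\mathscr{A}^{t};\vec{m})^2=J_{t,k}(A)$ give $r(\mathscr{A}^{s};\vec{n})^2\leq J_{s_1,k}(A)\,J_{s_2,k}(A)$, which you then feed into Theorem~\ref{BDG1}. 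Your split $s_1=\lfloor s/2\rfloor$, $s_2=\lceil s/2\rceil$ does satisfy $s_i\geq k(k+1)/2$ in all cases (when $s=k(k+1)$ exactly, $s$ is even, so both halves equal $k(k+1)/2$; when $s$ is odd, necessarily $s\geq k(k+1)+1$, so $\lfloor s/2\rfloor\geq k(k+1)/2$), so the dominant term in Theorem~\ref{BDG1} is indeed $|A|^{2s_i-k(k+1)/2}$, and the arithmetic closes exactly as you describe. Your argument is shorter, avoids any re-entry into the decoupling machinery, and makes transparent why a diameter hypothesis is needed (only through Theorem~\ref{BDG1}) and would automatically transfer any future unconditional improvement of that theorem; the paper's direct route has the mild advantage of not needing to re-split $s$ and being self-contained at the level of the decoupling lemma, but either route delivers the stated bound.
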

\begin{proof}
We use the same notation as defined at the beginning of the proof of Lemma $\ref{BDG2}$. We note that 
$$  r(\mathscr{A}^s) \  \leq \ |A|^{s- k(k+1)} \  r(\mathscr{A}^{k(k+1)}) ,$$
and thus, it suffices to prove the lemma for $ s = k(k+1)$. Hence, we fix $s = k(k+1)$. \par

We fix a Schwartz class function $\phi : \mathbb{R}^k \to [0, \infty)$ that satisfies $\phi(\vec{x}) > 0$ for all $\vec{x} \in \mathbb{R}^k$ and whose Fourier transform $\hat{\phi}$ satisfies $\hat{\phi}(\vec{\xi}) > 0$ for all $\vec{\xi} \in \mathbb{R}^k$ and $\hat{\phi}(\vec{\xi}) \geq 1$ for $|\vec{\xi}| \leq 1$.. \par

Let $\vec{n} = (n_1, \dots, n_k) \in \mathbb{R}^k$. Consider the following integral expression
\begin{equation} \label{open1}
 I =   \frac{1}{P^{k(k-1)/2}} \int_{\mathbb{R}^k} \mathfrak{g}(A; \vec{x})^{s} e(-x_1 n_1 - \dots -x_k n_k) \psi_{P}(\vec{x})d \vec{x}. 
\end{equation}
 We expand the $s$-exponent in $\eqref{open1}$ to get
\begin{equation} \label{open2}
 \frac{1}{P^{k(k-1)/2}}  \sum_{\vec{a} \in S_A^{s}} \int_{\mathbb{R}^k} e(x_1 \theta_1(\vec{a},\vec{n} ) + \dots + x_k \theta_k(\vec{a},\vec{n} )) \psi_{P}(\vec{x}) d \vec{x}, 
\end{equation}
where 
\[ \vec{a} = (a_1, \dots, a_{s}) \in S_A^{s},\]
and
\[ \theta_j(\vec{a},\vec{n} ) = \sum_{i=1}^{s} a_i^{j} - n_j,\] 
for all $1 \leq j \leq k$. As in the previous proof, we realise that the integral expression in $\eqref{open2}$ is the Fourier transform of the function $\phi(\frac{x_1}{P^{k-1}}, \dots, x_k)$, evaluated at the point $(\theta_{1}(\vec{a}, \vec{n}), \dots, \theta_{k}(\vec{a}, \vec{n}))$. We recall that
\[ \hat{\psi_{P}} (\eta_1, \dots, \eta_k) = P^{k(k-1)/2}\hat{\phi}(P^{k-1} \eta_1, \dots, \eta_k),   \] 
and thus our integral expression in $\eqref{open2}$ is equivalent to
\begin{equation} \label{open3}
 \sum_{\vec{a} \in S_A^{s}} \hat{\phi}(P^{k-1} \theta_{1}(\vec{a},\vec{n}), \dots, \theta_{k}(\vec{a},\vec{n})). 
\end{equation}
\par

We now define 
\[ T_A(\vec{n}) =  \{ \vec{a} \in S_A^{s} \ | \ \theta_j(\vec{a}, \vec{n}) = 0 \ \text{for all} \ 1 \leq j \leq k \}. \]
As in the previous proof, due to the suitable properties of the function $\phi$, we see that
\begin{align*}
\sum_{\vec{a} \in S_A^{s}} \hat{\phi}(P^{k-1} \theta_{1}(\vec{a},\vec{n}), \dots, \theta_{k}(\vec{a},\vec{n})) & \geq \sum_{\vec{a} \in T_A(\vec{n})} \hat{\phi}(P^{k-1} \theta_{1}(\vec{a}), \dots, \theta_{k}(\vec{a})) \\
& = \sum_{\vec{a} \in T_A(\vec{n})} \hat{\phi}(0, \dots, 0)  \geq  \sum_{\vec{a} \in T_A (\vec{n})} 1 .
\end{align*}
Let $\mathscr{S}_A = \{ (a,a^2, \dots, a^k) \ | \ a \in S_A \}$. 
Thus for $\vec{n} \in \mathbb{R}^k$, we have
\[ r(\mathscr{S}_A^{s}; \vec{n}) = \sum_{\vec{a} \in T_A (\vec{n})} 1 \leq \sum_{\vec{a} \in S_A^{s}} \hat{\phi}(P^{k-1} \theta_{1}(\vec{a},\vec{n}), \dots, \theta_{k}(\vec{a},\vec{n})) = I. \]
We go back to $\eqref{open1}$ and note that $\psi_{P}(\vec{x}) > 0$ for all $\vec{x} \in \mathbb{R}^k$. Thus, we apply the triangle inequality on the integral $I$ to get
\[I \leq  \frac{1}{P^{k(k-1)/2}} \int_{\mathbb{R}^k} |\mathfrak{g}(A; \vec{x})|^{2(s/2)} \psi_{P}(\vec{x}) d \vec{x}.  \] 
We note that the expression on the right is the same as the integral in $\eqref{wutname}$, with an $s/2$ instead of $s$. The integral in $\eqref{wutname}$ was equivalent to the left hand side of $\eqref{decoup}$. As $ s/2 = k(k+1)/2$, we use this equivalence to show
\[ I \ll P^{\epsilon} |A|^{s/2}, \]
for all $\epsilon > 0$. We set $\epsilon = \delta/2m$ in the above expression and use the fact that $P \ll X_A \leq |A|^m$ to get
\[r(\mathscr{S}_A^{s}; \vec{n}) \ll  |A|^{s/2 + \delta/2},\] 
for all $\vec{n} \in \mathbb{R}^k$ when $s=k(k+1)$. \par

As the Vinogradov system is translation-dilation invariant, the above implies that for all $\vec{n} \in \mathbb{R}^k$, we have
\[r(\mathscr{A}^{s}; \vec{n}) \ll |A|^{s/2 + \delta/2},\] 
whenever $s = k(k+1)$. This implies that 
\[ r(\mathscr{A}^{k(k+1)}) \ll |A|^{k(k+1)/2 + \delta/2},\]
and after majorising the implicit constant of the inequality by $|A|^{\delta/2}$, we show that
\[ r(\mathscr{A}^{k(k+1)}) \leq  |A|^{k(k+1)/2 + \delta}. \]
This concludes our proof.  
\end{proof}

\section{Proof of Theorem \ref{bsgvmvt}} \label{proofth}

In this section and the next, we will use $N$ to denote $|A|$. We begin by using double-counting to show that
\begin{equation}\label{eq:doublecount} \sum_{ \vec{n} \in s\mathscr{A}}  r(\mathscr{A}^s;\vec{n})^2 = J_{s,k}(A).  \end{equation}
Let $S$ denote the set of $\vec{n}$ that are highly-represented in $s\mathscr{A}$, that is, 
\begin{equation} \label{eq:defS} S = \{  \vec{n} \in s\mathscr{A} \ | \   r(\mathscr{A}^s;\vec{n}) \geq {\textstyle{\frac{1}{2}}}\alpha N^{s-k(k+1)/2}  \}. \end{equation}
Note that 
\begin{align*} \sum_{\vec{n} \in s\mathscr{A} \setminus S}  r(\mathscr{A}^s;\vec{n})^2 \ & \leq \  \frac{\alpha}{2}N^{s-k(k+1)/2} \sum_{\vec{n} \in s\mathscr{A} \setminus S}  r(\mathscr{A}^s;\vec{n}) \\ & \leq \  \frac{\alpha}{2}N^{s-k(k+1)/2} \sum_{ \vec{n} \in s\mathscr{A}}  r(\mathscr{A}^s;\vec{n})  \\ & = \  \frac{\alpha}{2}N^{s-k(k+1)/2} N^{s}. \end{align*}
Using the above inequality with \eqref{eq:doublecount} along with the hypothesis \eqref{h1}, we get
\begin{align}\sum_{\vec{n} \in S}  r(\mathscr{A}^s;\vec{n})^2 & = \ J_{s,k}(A) \ -  \sum_{\vec{n} \in s\mathscr{A} \setminus S}  r(\mathscr{A}^s;\vec{n})^2 \nonumber \\ & \geq \ \alpha N^{2s-k(k+1)/2} - \frac{\alpha}{2} N^{2s-k(k+1)/2} \nonumber \\ &   =   \ \frac{\alpha}{2} N^{2s-k(k+1)/2}.  \label{eq:idk1} \end{align}

Upon combining Lemma $\ref{folklore}$ with $\eqref{eq:idk1}$, we get that for all $\delta >0$,
\begin{align*}
 N^{s- \frac{k(k+1)}{2} + \delta} \sum_{\vec{n} \in S}  r(\mathscr{A}^s;\vec{n}) & \ \geq \  r(\mathscr{A}^s) \sum_{\vec{n} \in S}  r(\mathscr{A}^s;\vec{n}) \\ 
&\ \geq \   \sum_{\vec{n} \in S}  r(\mathscr{A}^s;\vec{n})^2  \\
& \ \geq \ \frac{\alpha}{2} N^{s-k(k+1)/2} N^{s}, 
\end{align*}
and thus,
\begin{equation} \label{sizeofG}
\sum_{\vec{n} \in S}  r(\mathscr{A}^s;\vec{n}) \ \geq \ \frac{\alpha}{2} N^{s-\delta}.
\end{equation}
Furthermore, upon combining the definition $\eqref{eq:defS}$ of $S$ along with $\eqref{h1}$ and $\eqref{eq:doublecount}$, we deduce that
$$ {  \bigg( \ \frac{\alpha}{2} N^{s-k(k+1)/2} \ \bigg)}^2  \sum_{\vec{n} \in S}1 \ \leq  \sum_{\vec{n} \in S}  r(\mathscr{A}^s;\vec{n})^2 \ \leq \ J_{s,k}(A) \ = \ \alpha N^{2s-k(k+1)/2}. $$
This gives us
\begin{equation} \label{sizeofSumG}
|S| \ \leq \ \frac{4}{\alpha} N^{k(k+1)/2}. 
\end{equation}
\par

Define $G \subseteq \mathscr{A}^s$ as
$$ G = \{  (a_1,a_2,\dots,a_s) \in \mathscr{A}^s \ | \  \sum_{i=1}^{s} a_i \in S \}.$$
By definition of $G$ and by $\eqref{sizeofG}$, we have
\begin{equation} \label{G1}
|G| \ = \sum_{\vec{n} \in S}  r(\mathscr{A}^s;\vec{n}) \ \geq \ \frac{\alpha}{2} N^{s-\delta}.
\end{equation}
From the definition of $\Sigma(G)$ in $\eqref{sigdef}$, we note that
\begin{equation} \label{SumG1}
|\Sigma(G)| = |S| \ \leq \ \frac{4}{\alpha} N^{k(k+1)/2}.  
\end{equation}
In the next section, we prove the following lemma about hypergraphs on $\mathscr{A}^s$ which will deliver our result.
\begin{lemma} \label{hypvmvt}
For every $m \geq 1$, $(s,k) \in \mathbb{N}^2$ such that $s \geq k(k+1)$, and sufficiently small positive number $\epsilon$, there exists $\delta >0$, such that the following holds. Suppose that $A$ is a finite well-spaced subset of real numbers such that $|A|$ is sufficiently large and $X_A \leq {|A|}^m$. Let $\alpha$ be any real number such that
\[ |A|^{- k(k+1) \epsilon /43200} \ \leq \ \alpha \ \leq 1 .  \]
If there exists $G \subseteq \mathscr{A}^s$ such that
\[  |G| \geq \frac{\alpha}{2} N^{s-\delta} \  \ \text{and} \  \ |\Sigma(G)| \leq  \frac{4}{\alpha} {|A|}^{k(k+1)/2},  \]
then there exists $A' \subseteq A$ with $|{A'}| \ \geq \ {\alpha}^{\epsilon/4} |A|^{1 - \epsilon k^2}$,
such that for all $l \in \mathbb{N}$, we have
\[ |l \mathscr{A'}| \ \leq \  {\alpha}^{-(1 + 2\epsilon(l+k^2))}  |A'|^{\frac{k(k+1)}{2} (1 + \epsilon( l +  k^2 ))} , \]
where $\mathscr{A'} = \{ (a,a^2,\dots,a^k) \ | \ a \in A'\}$.
\end{lemma}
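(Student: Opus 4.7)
The plan is to prove Lemma \ref{hypvmvt} by combining an iterative application of the Balog-Szemer\'edi-Gowers theorem (Theorem \ref{bsg}) on the hypergraph structure of $G$, in the spirit of Borenstein and Croot \cite{BC2011}, together with a Pl\"unnecke-Ruzsa-type propagation refined by the moment-curve bounds of Lemma \ref{folklore}.

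The first stage would extract a large subset $A' \subseteq A$ such that $|s\mathscr{A'}|$ matches the target bound in the case $l = s$. I would iterate over the $s$ coordinates of $G$: at each step, fix all but one coordinate and pass to the ``popular" $(s-1)$-tuples whose fibers in $G$ are dense, then apply Theorem \ref{bsg} to the resulting bipartite structure to extract a sub-slice with controlled sumset. Since $|G|/N^s \geq (\alpha/2) N^{-\delta}$ while $|\Sigma(G)| \leq (4/\alpha) N^{k(k+1)/2}$, each BSG step incurs a loss that is polynomial in $\alpha^{-1}$ up to a logarithmic factor, controlled by the quantities $C_1(\alpha)$ and $C_2(\alpha)$ of Theorem \ref{bsg}. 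Choosing $\delta$ small in terms of $\epsilon, s, k$, and using the hypothesis $\log(\alpha^{-1}) \leq \epsilon k(k+1) \log |A| / 43200$, the cumulative loss after $s$ iterations should stay within the target factor $\alpha^{\epsilon/4} N^{-\epsilon k^2}$.

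In the second stage, having $|s\mathscr{A'}|$ bounded by $\alpha^{-O(1)} |A'|^{k(k+1)/2\,(1 + O(\epsilon))}$, the conclusion for all $l \in \mathbb{N}$ would follow by propagation. For $l \leq s$, the trivial inclusion $l\mathscr{A'} + (s - l)\{\vec{a}_0\} \subseteq s\mathscr{A'}$ for any fixed $\vec{a}_0 \in \mathscr{A'}$ gives $|l\mathscr{A'}| \leq |s\mathscr{A'}|$, which already satisfies the target. For $l > s$, I would invoke the Pl\"unnecke-Ruzsa inequality in $\mathbb{R}^k$, using the sharp representation-function bound of Lemma \ref{folklore} as an auxiliary input; this should prevent the PR iteration from losing a full factor of $|A'|^{k(k+1)/2 - 1}$ per step, and should keep the $|A'|$ exponent growing only linearly in $l$ with slope $\epsilon k(k+1)/2$, matching the target.

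The hard part is the quantitative bookkeeping in the first stage: the $s$-fold iteration of Theorem \ref{bsg} on the hypergraph $G$ must be orchestrated so that the polynomial losses in the density at each step compound into exactly the stated exponent $1 - \epsilon k^2$ and the prefactor $\alpha^{\epsilon/4}$ in the size of $A'$. A secondary concern is that the bipartite structures emerging from the slicing of $G$ need not all have comparable sizes, so popularity and pigeonhole arguments must be carefully combined with BSG rather than applying BSG naively to each slice. Once this is handled, the second stage reduces to a routine iteration of Pl\"unnecke-Ruzsa combined with Lemma \ref{folklore}.
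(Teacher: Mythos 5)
Your proposal captures the broad ingredients (Borenstein--Croot slicing, BSG, Pl\"unnecke--Ruzsa), but the architecture differs from the paper's in two ways that create genuine gaps.

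First, the paper applies Theorem \ref{bsg} exactly \emph{once}, not iteratively over the $s$ coordinates. After reducing to $s$ even and splitting strings into halves, one works with right-neighbourhoods $R(x) \subseteq \mathscr{A}^{s/2}$, pigeonholes for a popular $x$, defines $G_1 = \{yz \in G : z \in R(x)\}$, and shows (Lemma \ref{idkw}, via Cauchy--Schwarz against the $J_{s/2,k}$ upper bound) that every dense subset $X \subseteq \mathscr{A}^{s/2}$ has $|\Sigma(X)| \geq |\Sigma(G_1)|^{1-\epsilon/1200}$. Proposition \ref{prebsg} then extracts equal-sized $Z_1 \subseteq \Sigma(Y_2)$ and $Z_2 \subseteq \Sigma(R(x))$ with large additive energy, and Theorem \ref{bsg} is applied once to this pair to produce a single set $S_2 \subseteq \Sigma(Y_2)$ with $|S_2+S_2| \leq |S_2|^{1+\epsilon/2}$. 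An $s$-fold iteration of BSG, as you describe, would compound the polynomial losses in $\alpha$ at each step and would not give the prefactor $\alpha^{\epsilon/4}$ nor the exponent $1-\epsilon k^2$.

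Second, and more importantly, your second stage has a gap: knowing that $|s\mathscr{A}'|$ is small does not by itself control $|l\mathscr{A}'|$ for $l > s$, and Lemma \ref{folklore} (a bound on the representation function $r(\mathscr{A}^s)$) supplies no small-doubling information that would let Pl\"unnecke--Ruzsa propagate. The paper's mechanism is structurally different: the subset $A'$ is chosen by finding $w \in \mathscr{A}^{s/2-1}$ such that $\mathscr{A}' = \{a : wa \in Y_3\}$ is large, where $Y_3$ is the set of $y \in Y_1$ with $\Sigma(y) \in S_2$. This yields the crucial inclusion $l\mathscr{A}' + l\Sigma(w) \subseteq l\Sigma(Y_3) \subseteq lS_2$; since $S_2$ has genuine small doubling, Pl\"unnecke--Ruzsa applied to $S_2$ bounds $|lS_2|$ for every $l$, and the inclusion transfers this directly to $|l\mathscr{A}'|$. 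In short, one needs a translate of $\mathscr{A}'$ sitting \emph{inside} a small-doubling set, not merely an upper bound on one iterated sumset of $\mathscr{A}'$. (A minor further issue: for $l < s$ the inclusion $|l\mathscr{A}'| \leq |s\mathscr{A}'|$ does not ``already satisfy the target,'' since the target exponent $\frac{k(k+1)}{2}(1+\epsilon(l+k^2))$ is strictly smaller than the one at $l=s$; the paper's route handles all $l$ uniformly via $|lS_2|$.)
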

 Note that the discussion leading to $\eqref{G1}$ and $\eqref{SumG1}$ combines with Lemma $\ref{hypvmvt}$ to prove Theorem $\ref{bsgvmvt}$. Thus, it suffices to show Lemma $\ref{hypvmvt}$, which is what we will do next. Throughout the proof of Lemma $\ref{hypvmvt}$, we will work with a given $\epsilon > 0$, and show that the corresponding result holds for each $\delta > 0$ which is sufficiently small in terms of $\epsilon$.

\section{Proof of Lemma $\ref{hypvmvt}$} \label{prooflm}

All that remains is to establish Lemma $\ref{hypvmvt}$, which is our primary focus in this section. We follow Borenstein and Croot's work from \cite{BC2011} closely in various parts of this section. 
\par

It will be ideal to describe elements in $\mathscr{A}^s$ as strings of length $s$ with alphabets from $\mathscr{A}$. Thus if $x \in \mathscr{A}^s$, then $x$ has a string representation as $x = a_1a_2 \dots a_s$, where $a_i \in \mathscr{A}$ for all $1 \leq i \leq s$. When $s$ is even, given a string $x$ of length $s/2$, we can define its right neighbourhood $R(x)$ as
\begin{equation*} R(x) = \{ y \in \mathscr{A}^{s/2} \ | \ xy \in G \}. \end{equation*}
Lastly, given a string $x \in \mathscr{A}^s$ such that $ x = a_1a_2 \dots a_s$, we can define 
$$ \Sigma(x) = \sum_{i=1}^{s} a_i .$$
\par

We consider $G \subseteq \mathscr{A}^s$ as described in the hypothesis of Lemma $\ref{hypvmvt}$. Let $s \geq k(k+1)$. We first show that the result follows for $s$ odd from the case of $s$ even. If $s$ is an odd number such that $s > k(k+1)$, we can write every string $x$ in $G$ as $x = x'  a_x$ such that $x'$ is a string of length $s-1$ and $a_x \in \mathscr{A}$. For each $a \in \mathscr{A}$, we can define $G_a \subseteq \mathscr{A}^{s-1}$ such that $x'a \in G$ for all $x' \in G_a$. By double counting, there must exist an $a \in \mathscr{A}$ such that 
$$ |G_a| \ \geq \ \frac{|G|}{|\mathscr{A}|} \ \geq \ \frac{\alpha}{2} N^{s-1 - \delta}.$$ 
Fixing such an $a$, we further show that
\begin{align*}
  | \Sigma(G_a)| & = | \{ a_1 + \dots + a_{s-1} \ | \ (a_1, \dots,a_{s-1}) \in G_a  \}| &\\ & \leq | \{ a_1 + \dots + a_{s-1} + a \  |  \ (a_1, \dots,a_{s-1}) \in G_a  \}|  &\\ &  \leq | \{ a_1 + \dots + a_{s-1} + a_{s} \ |  \ (a_1, \dots,a_{s-1},a_{s}) \in G  \}| &\\ & = |\Sigma(G)| \leq  \frac{4}{\alpha} N^{k(k+1)/2}.& \end{align*}
Now we can apply  Lemma $\ref{hypvmvt}$ for $G_a \subseteq \mathscr{A}^{s-1}$, wherein $s-1$ is an even number such that $s-1 \geq k(k+1)$, to get the desired conclusion. Thus, we can assume without loss of generality that $s$ is even.
\par

By double-counting, we get
\begin{equation} \label{dct1}
\sum_{x \in  \mathscr{A}^{s/2}} |R(x)| = |G| \geq \frac{\alpha}{2} N^{s-\delta}.
\end{equation}
\begin{lemma} \label{popl}
Let $V$ be a set of $n$ elements and $U_1,U_2,\dots,U_r \subseteq V$ such that
$$ \sum_{i=1}^{r} |U_i| \geq C r n^{1-\delta},$$
where $C$ is some positive number.
Then there exists $j \in \{1,2,\dots,r\}$ such that
$$ \sum_{i=1}^{r} |U_i \cap U_j| \geq C^2 rn^{1-2\delta}.$$
\end{lemma}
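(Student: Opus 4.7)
The plan is to prove this via a standard double-counting and Cauchy--Schwarz argument on incidences between $V$ and the family $\{U_i\}_{i=1}^r$.

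First, I would introduce the degree function $d(v) = |\{ i : v \in U_i \}|$ for each $v \in V$. Counting incidences by elements of $V$ versus by sets $U_i$ yields
\[
\sum_{v \in V} d(v) \;=\; \sum_{i=1}^{r} |U_i| \;\geq\; Crn^{1-\delta}.
\]
Next, the key identity is that
\[
\sum_{j=1}^{r} \sum_{i=1}^{r} |U_i \cap U_j| \;=\; \sum_{v \in V} d(v)^2,
\]
obtained by switching the order of summation: each triple $(v,i,j)$ with $v \in U_i \cap U_j$ contributes $1$ on each side.

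Then I would apply Cauchy--Schwarz in the form $\sum_{v \in V} d(v)^2 \geq \frac{1}{|V|}\bigl(\sum_{v \in V} d(v)\bigr)^2$, using $|V|=n$ and the lower bound for $\sum_v d(v)$ above, to deduce
\[
\sum_{j=1}^{r} \sum_{i=1}^{r} |U_i \cap U_j| \;\geq\; \frac{1}{n}\bigl(Crn^{1-\delta}\bigr)^2 \;=\; C^2 r^2 n^{1-2\delta}.
\]
Finally, by averaging over the $r$ choices of $j$, there must exist at least one index $j \in \{1, \dots, r\}$ for which $\sum_{i=1}^{r} |U_i \cap U_j| \geq C^2 r n^{1-2\delta}$, which is the conclusion sought.

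This is a completely standard pigeonhole-after-Cauchy--Schwarz argument and I do not anticipate any real obstacle; the only minor care needed is tracking the exponent $1-2\delta$ that emerges from squaring the $n^{1-\delta}$ lower bound and dividing by the single factor of $n = |V|$.
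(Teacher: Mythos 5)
Your proof is correct and follows essentially the same argument as the paper: your degree function $d(v)$ is precisely $\sum_{i=1}^{r} \mathbf{1}_{U_i}(v)$, and both arguments apply Cauchy--Schwarz to pass from $\sum_v d(v)$ to $\sum_v d(v)^2 = \sum_{i,j}|U_i \cap U_j|$, then finish by pigeonhole over $j$.
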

\begin{proof}
Let ${1}_{A}(x)$ be the indicator function of set $A$. Thus, we have
$$ \sum_{x \in V} \sum_{i=1}^{r} {1}_{U_i}(x) = \sum_{i=1}^{r} \sum_{x \in V} {1}_{U_i}(x) =  \sum_{i=1}^{r} |U_i| \geq C r n^{1-\delta}. $$
Applying the Cauchy-Schwarz inequality on the above, we get
$$ \sum_{x \in V} 1 \sum_{x \in V} \bigg( \sum_{i=1}^{r} {1}_{U_i}(x) \bigg)^2 \geq C^2 r^2 n^{2-2\delta},$$
which in turn gives us
$$\sum_{x \in V} \sum_{j=1}^{r} \sum_{i=1}^{r}{1}_{U_j}(x){1}_{U_i}(x)  \geq C^2 r^2 n^{1-2\delta}. $$
Interchanging the summations, we get
$$\sum_{j=1}^{r} \sum_{i=1}^{r} |U_i \cap U_j| \geq  C^2 r^2 n^{1-2\delta},  $$
from which we can deduce the existence of some $j \in \{ 1,2,\dots,r\}$ such that 
$$ \sum_{i=1}^{r} |U_i \cap U_j| \geq C^2  rn^{1-2\delta}.$$
\end{proof}

Applying Lemma $\ref{popl}$ to $\eqref{dct1}$, we show that there exists an $x \in \mathscr{A}^{s/2}$ such that 
\begin{equation}\label{rxcapry} \sum_{y \in \mathscr{A}^{s/2}} |R(x) \cap R(y)| \geq \frac{{\alpha}^2}{4} N^{s-2\delta}. \end{equation}
Fixing such an $x$, let $G_1 = \{ yz \in G \ | \ z \in R(x)\}$. Note that by definition of $G_1$, $$|G_1| = \sum_{y \in \mathscr{A}^{s/2}} |R(x) \cap R(y)| \geq \frac{ {\alpha}^2}{4} N^{s-2\delta}. $$
\begin{lemma} \label{idkw}
Given $\epsilon > 0$, the following holds for all $\delta >0$ sufficiently small with respect to $\epsilon$. Suppose $X \subseteq \mathscr{A}^{s/2}$ such that $$|X| \geq \frac{\alpha^4}{2^5} N^{s/2 - 8\delta}.$$ Then we have
\begin{equation} \label{sizeofry}
|\Sigma (X)| \geq |\Sigma(G_1)| ^{1- \epsilon/1200}.
\end{equation}
\end{lemma}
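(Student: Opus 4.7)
The plan is to deduce Lemma \ref{idkw} by combining a Cauchy--Schwarz argument with the sharp upper bound for $J_{s/2,k}(A)$ supplied by Theorem \ref{BDG1}, and then comparing the resulting lower bound on $|\Sigma(X)|$ against the absolute upper bound $|\Sigma(G_1)| \leq |\Sigma(G)| \leq \frac{4}{\alpha}N^{k(k+1)/2}$ from the hypothesis of Lemma \ref{hypvmvt}. Since $X \subseteq \mathscr{A}^{s/2}$, one has $r(X;\vec{n}) \leq r(\mathscr{A}^{s/2};\vec{n})$ for every $\vec{n}$, and so Cauchy--Schwarz gives
\begin{equation*}
|X|^2 \;=\; \biggl(\sum_{\vec{n} \in \Sigma(X)} r(X;\vec{n})\biggr)^{\!2} \;\leq\; |\Sigma(X)| \sum_{\vec{n}} r(\mathscr{A}^{s/2};\vec{n})^2 \;=\; |\Sigma(X)|\,J_{s/2,k}(A),
\end{equation*}
so that $|\Sigma(X)| \geq |X|^2/J_{s/2,k}(A)$.

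Next, since the hypothesis $s \geq k(k+1)$ of Lemma \ref{hypvmvt} forces $s/2 \geq k(k+1)/2$, Theorem \ref{BDG1} together with $X_A \leq N^m$ yields $J_{s/2,k}(A) \leq 2 N^{m\epsilon'} N^{s - k(k+1)/2} \leq N^{s - k(k+1)/2 + \eta}$ for any preassigned $\eta > 0$, provided its $\epsilon'$-parameter is chosen small enough in terms of $m$ and $\eta$ and $N$ is sufficiently large. Substituting the hypothesis $|X| \geq \tfrac{\alpha^4}{2^5}N^{s/2 - 8\delta}$ yields
\begin{equation*}
|\Sigma(X)| \;\geq\; \frac{\alpha^8}{2^{10}}\, N^{k(k+1)/2 - 16\delta - \eta}.
\end{equation*}

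On the other hand, the upper bound on $|\Sigma(G_1)|$ gives $|\Sigma(G_1)|^{1-\epsilon/1200} \leq (4/\alpha)^{1-\epsilon/1200} N^{(k(k+1)/2)(1-\epsilon/1200)}$, so the target inequality $|\Sigma(X)| \geq |\Sigma(G_1)|^{1-\epsilon/1200}$ reduces, after collecting factors, to a scalar bound of the form
\begin{equation*}
\alpha^{9-\epsilon/1200}\, N^{k(k+1)\epsilon/2400 - 16\delta - \eta} \;\geq\; 2^{12}.
\end{equation*}
The hypothesis $\alpha \geq N^{-k(k+1)\epsilon/43200}$ of Lemma \ref{hypvmvt} gives $\alpha^{9-\epsilon/1200} \geq N^{-k(k+1)\epsilon/4800}$ for $\epsilon$ small, so the left-hand side is at least $N^{k(k+1)\epsilon/4800 - 16\delta - \eta}$; choosing $\delta$ and $\eta$ small enough in terms of $\epsilon$ (say both at most $k(k+1)\epsilon/10^6$) and taking $N$ sufficiently large makes this exceed $2^{12}$, completing the argument.

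The main obstacle is quantitative bookkeeping: the constant $\alpha^4/2^5$ and the exponent $8\delta$ in the hypothesis on $|X|$, together with the $\epsilon/1200$ slack in the conclusion, are calibrated precisely so that the $\alpha$-bound $\log\alpha^{-1} \leq k(k+1)\epsilon(\log N)/43200$ from Lemma \ref{hypvmvt} is just strong enough to absorb the various losses --- from Cauchy--Schwarz (which doubles the influence of $|X|$), from the $N^\eta$ slack in Theorem \ref{BDG1}, and from the passage from $|\Sigma(G_1)|^{1-\epsilon/1200}$ to its crude upper bound. The qualitative content of the lemma is that the near-optimality of the Vinogradov upper bound on $J_{s/2,k}(A)$ prevents $\Sigma(X)$ from being substantially smaller than $\Sigma(G_1)$ whenever $X$ captures a noticeable fraction of $G_1$.
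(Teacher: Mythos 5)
Your proposal is correct and follows essentially the same route as the paper: Cauchy--Schwarz on the representation function of $X$ to obtain $|\Sigma(X)| \geq |X|^2/J_{s/2,k}(A)$, the near-optimal upper bound on $J_{s/2,k}(A)$ from Theorem~\ref{BDG1} (using $s/2\ge k(k+1)/2$ and $X_A\le N^m$), the bound $|\Sigma(G_1)|\le 4\alpha^{-1}N^{k(k+1)/2}$ from the hypothesis on $G$, and the lower bound $\alpha\ge N^{-k(k+1)\epsilon/43200}$ to make the exponent bookkeeping work out. The only difference is presentational: the paper phrases the argument as a contradiction (assuming $|\Sigma(X)|<|\Sigma(G_1)|^{1-\epsilon/1200}$ and deriving a too-large lower bound on $J_{s/2,k}(A)$), whereas you run the same estimates directly; the two are logically equivalent.
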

\begin{proof}
We will prove $\eqref{sizeofry}$ by contradiction. We start by defining a variant of $r(\mathscr{A}^s;\vec{n})$. For $\vec{n} \in \mathbb{R}^k$, we write
$$r(X;\vec{n}) = | \{ (a_1,a_2,\dots,a_{s/2}) \in X \ | \ \vec{n} =  \sum_{i=1}^{s/2} a_i \ \}|. $$
We now use the Cauchy-Schwarz inequality to get
$$  |X|^2 \ = \ \big( \sum_{\vec{n} \in \Sigma(X)} r(X ;\vec{n}) \big)^2  \leq  \sum_{\vec{n} \in \Sigma(X)} r(X ;\vec{n})^2  \sum_{\vec{n} \in \Sigma(X)}1 \ .   $$
From the above expression, we have
$$ J_{s/2,k}(A)  \geq \sum_{\vec{n} \in \Sigma(X)} r(X ;\vec{n})^2  \geq \frac{|X|^2}{|\Sigma(X)|}.$$
Thus if $\eqref{sizeofry}$ does not hold, that is, if $$ |\Sigma (X)| < |\Sigma(G_1)| ^{1- \epsilon/1200} \leq |\Sigma(G)| ^{1- \epsilon/1200} \leq \bigg(\frac{4}{\alpha}\bigg)^{1- \epsilon/1200} N^{\frac{k(k+1)}{2}({1- \epsilon/1200})},  $$
then we can show that
\begin{align*} J_{s/2,k}(A)  
& > \bigg(\frac{\alpha}{4}\bigg)^{1- \epsilon/1200}  {|X|^2}{N^{-\frac{k(k+1)}{2}({1- \epsilon/1200})}} \\ 
& \geq \bigg(\frac{\alpha}{4}\bigg)^{1- \epsilon/1200}  \frac{{\alpha}^8}{2^{10}}  {N^{s- 16\delta}}{N^{-\frac{k(k+1)}{2}({1- \epsilon/1200})}} \\ 
& \geq  \frac{{\alpha}^{9 -\epsilon/1200}}{2^{12}} {N^{s- k(k+1)/2}}{N^{{ \frac{k(k+1)}{2400}\epsilon  - 16\delta}}}.
 \end{align*}
The hypothesis of Lemma $\ref{hypvmvt}$ permits us the assumption that 
$$\alpha \geq N^{-k(k+1) \epsilon /43200} .$$ 
Thus we have
\begin{equation} \label{est}  J_{s/2,k}(A)  \ > \ N^{s-k(k+1)/2} N^{\frac{k(k+1)}{9600} \epsilon}                                    ,\end{equation}
whenever $N$ is sufficiently large and $\delta$ is sufficiently small as compared to $\epsilon$. But observe that as $s/2 \geq k(k+1)/2$, the upper bound $\eqref{eq:uppbd1}$ holds true for $J_{s/2,k}(A)$ when $N$ is large enough in terms of $\epsilon, s$ and $k$, which contradicts $\eqref{est}$. Thus $\eqref{sizeofry}$ must hold.
\end{proof}
We will now work with strings $y \in   \mathscr{A}^{s/2}$ that have a large $R(y) \cap R(x)$. Thus, we let 
 $$ Y = \{ y \in   \mathscr{A}^{s/2} \ | \  |R(y) \cap R(x)| \ \geq \frac{{\alpha}^2}{4}  N^{s/2 - 4\delta}\}.$$
We can infer from Lemma $\ref{idkw}$ that for each $y \in Y$, we have
\begin{equation} \label{idkk} |\Sigma (R(y) \cap R(x))| \geq |\Sigma(G_1)| ^{1- \epsilon/1200}. \end{equation}
We observe that
\[   \sum_{y \notin Y} |R(x) \cap R(y)|   <  \frac{{\alpha}^2}{4}  N^{s/2 - 4\delta} \sum_{y \notin Y}1  \leq  \frac{{\alpha}^2}{4}  N^{s/2 - 4\delta} N^{s/2}.  \]
Using this with $\eqref{rxcapry}$, we get
\[ \sum_{y \in Y}|R(x) \cap R(y)| =  \sum_{y \in \mathscr{A}^{s/2}} |R(x) \cap R(y)| - \sum_{y \notin Y} |R(x) \cap R(y)| \geq \frac{{\alpha}^2}{4} N^{s-2\delta} - \frac{{\alpha}^2}{4}  N^{s - 4\delta} .   \]
Thus when $N$ is sufficiently large, we have 
\[ |Y| N^{s/2} \geq \sum_{y \in Y}|R(x) \cap R(y)| \geq \frac{{\alpha}^2}{4} N^{s-2\delta} - \frac{{\alpha}^2}{4}  N^{s - 4\delta}  \geq  \frac{{\alpha}^2}{4} N^{s- 4\delta},       \]
which implies that
$$ |Y| \geq \frac{{\alpha}^2}{4} N^{s/2- 4\delta}. $$

\par
Note that as
$$ \sum_{z \in R(x)} | \{ y \in Y \ | \ yz \in G_1 \} | \ = \  \sum_{y \in Y} |R(y) \cap R(x)| \ \geq \ \frac{{\alpha}^2}{4} |Y| N^{s/2 - 4\delta} \  \geq  \ \frac{{\alpha}^4}{2^4} N^{s- 8\delta},$$
there must exist some $z \in R(x)$ such that 
\begin{equation} \label{idk4}  | \{ y \in Y \ | \ yz \in G_1 \} |  \ \geq \ \frac{{\alpha}^4}{2^4} N^{s/2 - 8\delta}. \end{equation}
Fixing such a $z$, we let $Y_1 =  \{ y \in Y \ | \ yz \in G_1 \}$. We will prune $Y_1$ further to get the set of all popular representations in $Y_1$. To this end, we will look at the highly-represented sums, that is, let $S_1 \subseteq \Sigma(Y_1)$ such that
$$ S_1 = \bigg\{ \vec{n} \in  \Sigma(Y_1) \ | \   r(Y_1;\vec{n}) >  \frac{|Y_1|}{2 |\Sigma(Y_1)| \  }  \bigg\},$$
and define $Y_2 \subseteq Y_1$ as follows
$$ Y_2 = \{  y \in Y_1 \ | \  \Sigma(y) \in S_1 \}.$$
We can get a lower bound for popular representations by bounding the number of unpopular representations. Thus
$$ |Y_1 \setminus Y_2| = \sum_{\vec{n} \in \Sigma(Y_1\setminus Y_2)}  r((Y_1 \setminus Y_2);\vec{n}) \ \leq \ | \Sigma(Y_1\setminus Y_2)|  \frac{|Y_1|}{2 |\Sigma(Y_1)|} \ \leq  \frac{|Y_1|}{2}.$$
Using the above upper bound along with $\eqref{idk4}$, we can deduce that
$$ |Y_2| \geq  \frac{|Y_1|}{2} > \frac{{\alpha}^4}{2^5} N^{s/2 - 8\delta}. $$
As $Y_2$ is a large subset of $\mathscr{A}^{s/2}$, by Lemma $\ref{idkw}$ , we have
\begin{equation} \label{idk5}  |\Sigma (Y_2)| \geq |\Sigma(G_1)| ^{1- \epsilon/1200}. \end{equation}
Our set up for the proof of Lemma $\ref{hypvmvt}$ is complete. 

\begin{Proposition} \label{prebsg}
There exist sets $Z_1 \subseteq \Sigma(Y_2)$ and $Z_2 \subseteq  \Sigma(R(x))$ such that 
\begin{equation} \label{bwrt}   
|\Sigma(G_1)|^{1- \epsilon /1200} \leq  |Z_1| = |Z_2| \leq  |\Sigma(G_1)|    
\end{equation}
and $$E(Z_1,Z_2) \ \geq \ \frac{1}{4 |\Sigma(G_1)|^{\epsilon / 200}} \ |Z_1|^{3}.$$ 
\end{Proposition}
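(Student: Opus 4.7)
The plan is to convert the combinatorial structure built in Section $\ref{prooflm}$ into a high additive energy estimate, from which a pair $(Z_1, Z_2)$ meeting the stated requirements can be extracted. The key observation is that for any $y \in Y_2 \subseteq Y$ and any $w \in R(y) \cap R(x)$, we have $yw \in G$ and $w \in R(x)$, so $yw \in G_1$ by the definition of $G_1$; consequently $\Sigma(y) + \Sigma(w) = \Sigma(yw) \in \Sigma(G_1)$. Combined with \eqref{idkk}, this says that for each $\sigma \in \Sigma(Y_2)$ there are at least $|\Sigma(G_1)|^{1-\epsilon/1200}$ distinct values of $\tau \in \Sigma(R(x))$ with $\sigma + \tau \in \Sigma(G_1)$, obtained as $\tau = \Sigma(w)$ for $w \in R(y_\sigma) \cap R(x)$ where $y_\sigma \in Y_2$ is any chosen representative with $\Sigma(y_\sigma) = \sigma$.

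Defining
\[ P = \{ (\sigma, \tau) \in \Sigma(Y_2) \times \Sigma(R(x)) \ | \ \sigma + \tau \in \Sigma(G_1) \}, \]
the observation above yields $|P| \geq |\Sigma(Y_2)| \cdot |\Sigma(G_1)|^{1-\epsilon/1200}$. Since every pair in $P$ has its sum in the set $\Sigma(G_1)$, a Cauchy-Schwarz inequality on the fibers of the map $(\sigma, \tau) \mapsto \sigma + \tau$ then gives
\[ E(\Sigma(Y_2), \Sigma(R(x))) \ \geq \ |P|^{2}/|\Sigma(G_1)| \ \geq \ |\Sigma(Y_2)|^{2} \cdot |\Sigma(G_1)|^{1-\epsilon/600}. \]
I would also verify the upper bounds $|\Sigma(Y_2)|, |\Sigma(R(x))| \leq |\Sigma(G_1)|$ by noting that $\Sigma(Y_2) + \Sigma(z) \subseteq \Sigma(G_1)$ for the fixed $z \in R(x)$ chosen after \eqref{idk4}, and similarly $\Sigma(x) + \Sigma(R(x)) \subseteq \Sigma(G_1)$.

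To produce equal-sized $Z_1$ and $Z_2$, assume without loss of generality that $|\Sigma(Y_2)| \leq |\Sigma(R(x))|$, set $Z_1 = \Sigma(Y_2)$, and choose $Z_2 \subseteq \Sigma(R(x))$ of cardinality $|Z_1|$ uniformly at random. A short averaging computation gives $\mathbb{E}[E(Z_1, Z_2)] \geq (|Z_1|/|\Sigma(R(x))|)^{2} E(Z_1, \Sigma(R(x)))$, and plugging in the estimates above together with $|\Sigma(Y_2)| \geq |\Sigma(G_1)|^{1-\epsilon/1200}$ and $|\Sigma(R(x))| \leq |\Sigma(G_1)|$ simplifies this to $\mathbb{E}[E(Z_1, Z_2)] \geq |\Sigma(G_1)|^{3-\epsilon/200}$. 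Since $|Z_1|^{3} \leq |\Sigma(G_1)|^{3}$, pigeonholing produces a specific $Z_2$ satisfying $E(Z_1, Z_2) \geq |Z_1|^{3}/(4|\Sigma(G_1)|^{\epsilon/200})$, as required. The main delicate point will be separating diagonal and off-diagonal contributions to $E(Z_1, \Sigma(R(x)))$ in the random-sampling step, so that the constant factor $\tfrac{1}{4}$ in the conclusion is comfortably achieved; this is a routine binomial-coefficient calculation and involves no essentially new ideas.
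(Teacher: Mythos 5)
Your proof is correct in spirit and reaches the right conclusion, but it takes a genuinely different route from the paper in the \enquote{size equalization} step. Both arguments begin from the same key observation: using \eqref{idkk} and the definition of $G_1$, one lower-bounds the popularity sum $\sum_{\vec n\in\Sigma(G_1)} r\bigl(\Sigma(Y_2),\Sigma(R(x));\vec n\bigr)$ (your $|P|$) by $|\Sigma(Y_2)|\cdot|\Sigma(G_1)|^{1-\epsilon/1200}$, and one applies Cauchy--Schwarz over $\vec n\in\Sigma(G_1)$ to convert this into a lower bound on additive energy. Where you then diverge: you first obtain a lower bound on the full energy $E\bigl(\Sigma(Y_2),\Sigma(R(x))\bigr)$ and afterwards extract equal-sized $Z_1,Z_2$ by random subsampling of the larger set; the paper instead splits the larger set $U$ deterministically into $r\le 2|\Sigma(G_1)|^{\epsilon/1200}$ blocks of size $|V|$, folds the block index into the Cauchy--Schwarz step so the output is directly $\max_i E(U_i,V)$, and then pads the winning block to reach size $|V|$. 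The two routes are comparable in length; the paper's avoids the probabilistic bookkeeping you flag, while yours avoids the $r^2$ accounting and the padding construction.

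Two small inaccuracies you should repair. First, the displayed sampling inequality $\mathbb{E}[E(Z_1,Z_2)]\ge\bigl(|Z_1|/|\Sigma(R(x))|\bigr)^2 E\bigl(Z_1,\Sigma(R(x))\bigr)$ is false as stated: the probability that two distinct $y_1,y_2\in\Sigma(R(x))$ both land in a uniformly random $|Z_1|$-subset is $\tfrac{|Z_1|(|Z_1|-1)}{n(n-1)}$ with $n=|\Sigma(R(x))|$, which is strictly smaller than $(|Z_1|/n)^2$ when $|Z_1|<n$. The correct route (which you gesture at) is to note $\mathbb{E}[E(Z_1,Z_2)]\ge\tfrac{|Z_1|(|Z_1|-1)}{n(n-1)}E\bigl(Z_1,\Sigma(R(x))\bigr)\ge\tfrac12(|Z_1|/n)^2E\bigl(Z_1,\Sigma(R(x))\bigr)$ for $|Z_1|\ge2$, and the extra factor $\tfrac12$ is absorbed by the $\tfrac14$ in the conclusion. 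Second, in the \enquote{without loss of generality} you should make explicit that the case $|\Sigma(R(x))|<|\Sigma(Y_2)|$ additionally uses \eqref{ratio1}, i.e.\ the lower bound $|\Sigma(R(x))|\ge|\Sigma(G_1)|^{1-\epsilon/1200}$ coming from Lemma \ref{idkw} applied to $R(x)$, to guarantee the lower bound $|Z_1|\ge|\Sigma(G_1)|^{1-\epsilon/1200}$ when $Z_2=\Sigma(R(x))$ is the whole set and $Z_1$ is the sampled subset of $\Sigma(Y_2)$.
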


\begin{proof}
We know that 
\begin{equation} \label{ratio}  |\Sigma(G_1)|^{1- \epsilon /1200} \leq |\Sigma(Y_2)|  \leq  |\Sigma(G_1)|,  \end{equation}
and
\begin{equation} \label{ratio1}  |\Sigma(G_1)|^{1- \epsilon /1200} \leq |\Sigma(R(x))| \leq  |\Sigma(G_1)|.  \end{equation}
For ease of notation, if $|\Sigma(Y_2)| \geq  |\Sigma(R(x))|$, let $U = \Sigma(Y_2)$, $V = \Sigma(R(x))$, otherwise let $U =  \Sigma(R(x))$ and $V = \Sigma(Y_2)$. Further, for $\vec{n} \in \mathbb{R}^k$ and sets $A,B \subseteq \mathbb{R}^k$, define
$$ r(A,B;\vec{n}) = | \{ (a,b) \in A\times B \ | \ \vec{n} = a + b\}|. $$
\par

Split $U$ into a disjoint union of subsets $U_1, U_2, \dots, U_r$ such that $$|V| = |U_1| = |U_2| = \dots = |U_{r-1}| \geq |U_r|.$$
By using the Cauchy-Schwarz inequality, we see that
\begin{align} 
 \bigg(\sum_{\vec{n} \in \Sigma(G_1)} r(U,V;\vec{n}) \bigg)^2 \ 
& = \ \bigg(\sum_{i=1}^{r} \sum_{\vec{n} \in \Sigma(G_1)} r(U_i,V;\vec{n})\bigg)^2   \nonumber \\ 
&  \leq \ r |\Sigma(G_1)|  \sum_{i=1}^{r}\sum_{\vec{n} \in \Sigma(G_1)} r(U_i,V;\vec{n})^2  \nonumber \\  
& =  \ r \ |\Sigma(G_1)| \ \sum_{i=1}^{r} E(U_i,V) \nonumber \\ 
&  \leq \ r^2  |\Sigma(G_1)| \ \max_{1\leq i \leq r}  E(U_i,V).   \label{idkbp} 
\end{align}
Moreover, we have
\begin{equation*}  \begin{split} 
\sum_{\vec{n} \in \Sigma(G_1)} r(U,V;\vec{n}) \ 
& = \ \sum_{\vec{n} \in \Sigma(G_1)} | \{ (a,b) \in \Sigma(Y_2)\times \Sigma(R(x)) \ | \ \vec{n} = a+b  \}| \\  
& \geq \ |\Sigma(Y_2)| \ {\min_{y \in Y_2}} \  | \sum( R(y) \cap R(x))| .
\end{split} \end{equation*}
Note that as $Y_2 \subseteq Y$, the bound $\eqref{idkk}$ holds for all $y \in Y_2 $. Using this along with $\eqref{ratio}$, we deduce that
\[    \sum_{\vec{n} \in \Sigma(G_1)} r(U,V;\vec{n}) \ \geq \ |\Sigma(Y_2)| \ {\min_{y \in Y_2}} \  | \sum( R(y) \cap R(x))| \ \geq \    \big|\Sigma(G_1) \big|^{2- \epsilon/600}. \] 
Combining the above bounds with $\eqref{idkbp}$, we get
$$r^2  |\Sigma(G_1)| \ \max_{1\leq i \leq r}  E(U_i,V) \ \geq |\Sigma(G_1)|^{4- \epsilon /300}.$$
If $r=1$, then we have
\[\max_{1\leq i \leq r}  E(U_i,V)  \ \geq \ |\Sigma(G_1)|^{3 - \epsilon / 200}. \]
If $r \geq 2$, then 
\[|U| = \sum_{i=1}^r |U_i| \geq \sum_{i=1}^{r-1} |U_i| \geq \frac{r}{2} |V|,\]
and noting $\eqref{ratio}$ and $\eqref{ratio1}$, we get that
\[ r \leq 2 \frac{|U|}{|V|} \leq 2|\Sigma(G_1)|^{\epsilon/1200}.\] 
Thus we have
\begin{equation} \label{ref1}
\max_{1\leq i \leq r}  E(U_i,V)  \ \geq \frac{1}{4} |\Sigma(G_1)|^{3 - \epsilon / 200}.
\end{equation}
\par

In either case, $\eqref{ref1}$ holds. Let $j \in \{1,2,\dots,r\}$ such that $E(U_j,V) = \max_{1\leq i \leq r}  E(U_i,V)$. We set $Z_1 = V$. If $j \neq r$, set $Z_2 = U_j$. If $j=r$, set $Z_2 = U_r \cup Z_3$ where $Z_3$ is some subset of $U \setminus U_r$ such that $|Z_3| = |V| - |U_r|$. In both the cases, we get two sets $Z_1, Z_2$ such that $|Z_1| = |Z_2|$. Moreover, using $\eqref{ref1}$, we see that
$$E(Z_1,Z_2) \ \geq \ \frac{1}{4} |\Sigma(G_1)|^{3 - \epsilon / 200} \ \geq \  \frac{1}{4 |\Sigma(G_1)|^{\epsilon / 200}} \ |Z_1|^{3} ,$$
and one of $Z_1,Z_2$ is a subset of $\Sigma(Y_2)$, while the other is a subset of $\Sigma(R(x))$.
\end{proof}

We now apply Theorem $\ref{bsg}$ to the sets $Z_1,Z_2$ that we get from Proposition $\ref{prebsg}$. If $N$ is large enough in terms of $\epsilon$, then $1/|\Sigma(G_1)|^{\epsilon/200}$ is small enough and thus, we get a set $S_2 \subseteq Z_1 \subseteq \Sigma(Y_2)$ such that
$$ |S_2| \geq C^{4} |Z_1| \ , \ |S_2 + S_2| \leq \frac{1}{C^{8}} |Z_1| ,$$
where $$C = \frac{1}{4 |\Sigma(G_1)|^{\epsilon / 200}} \ .$$
If $N$ is sufficiently large, we can show that
\begin{equation} \label{idkpfm}
|S_2| \geq |Z_1|^{1 - \epsilon/5} \ , \ |S_2 + S_2| \leq |S_2|^{1 + \epsilon/2}.
\end{equation}
Observe that $S_2 \subseteq \Sigma(Y_2)$. Thus let $Y_3$ be the set of all strings $y \in Y_1$ such that $\Sigma(y) \in S_2$. 
\par

We can estimate the size of $Y_3$, as for any $y \in Y_1$, if $\Sigma(y) \in \Sigma(Y_2) = S_1$, then $y$ is a popular representation. Hence, we have
\begin{align*}
|Y_3| = \sum_{\vec{n} \in S_2} r(Y_1;\vec{n}) \ \geq \ |S_2| \frac{|Y_1|}{2 |\Sigma(Y_1)|}.
\end{align*}
Noting $\eqref{idkpfm}$ and the fact that $\Sigma(Y_1) \leq \Sigma(G_1)$, we get
\[ |S_2| \frac{|Y_1|}{2 |\Sigma(Y_1)|} \ \geq \  \frac{1}{2 |\Sigma(G_1)| \  } |Z_1|^{1 - \epsilon/5} |Y_1| .\]
Next, we use $\eqref{bwrt}$ and $\eqref{idk4}$ to see that
\[\frac{1}{2 |\Sigma(G_1)| \  } |Z_1|^{1 - \epsilon/5} |Y_1| \geq   \frac{\alpha^4}{2^5} |\Sigma(G_1)|^{-\epsilon/4} N^{s/2 - 8\delta} . \] 
The hypothesis of Lemma $\ref{hypvmvt}$ implies that $|\Sigma(G_1)| \leq |\Sigma(G)| \leq 4\alpha^{-1} N^{k(k+1)/2}$ and $\alpha \geq N^{-k(k+1)\epsilon/43200}$. Thus, we deduce that
\[ \frac{\alpha^4}{2^5} |\Sigma(G_1)|^{-\epsilon/4} N^{s/2 - 8\delta} \geq {\alpha}^{\epsilon/4} N^{s/2 - 8\delta - k(k+1)\epsilon /4}, \]
when $N$ is large enough. Combining these inequalities, we get
\[ |Y_3| \geq {\alpha}^{\epsilon/4} N^{s/2 - 8\delta - k(k+1)\epsilon /4}. \] 
\par

As $Y_3 \subseteq \mathscr{A}^{s/2}$, we can deduce from the above inequality that there exists some string $w \in \mathscr{A}^{s/2-1}$ such that $wa \in Y_3$ for at least ${\alpha}^{\epsilon/4} N^{1 - 8\delta - k(k+1)\epsilon /4}$ elements $a \in \mathscr{A}$. Fix such a string $w \in \mathscr{A}^{s/2-1}$, and define
$$ \mathscr{A'} = \{  a \in \mathscr{A} \ | \ wa \in Y_3 \}. $$
From above, we know that
\begin{equation} \label{f1} 
|\mathscr{A'}| \geq  {\alpha}^{\epsilon/4}|\mathscr{A}|^{1 - 8\delta - k(k+1)\epsilon /4} \geq {\alpha}^{\epsilon/4} |\mathscr{A}|^{1 - \epsilon k^2}, 
\end{equation}
if  $\delta$ is sufficiently small in terms of $\epsilon$. Furthermore, for all $l \in \mathbb{N}$ we can see that
\begin{equation} \label{f2} 
 l \mathscr{A'} + l\Sigma(w) \ \subseteq \ l\Sigma(Y_3) \ \subseteq \ l S_2.  
\end{equation}
We have small doubling for the set $S_2$, but we actually need bounds for the $l$-fold sumset $lS_2$. In order to move from strong upper bounds for sumsets to estimates for many fold sets, we will use the Pl{\"u}nnecke-Ruzsa theorem as stated in \cite[Corollary 6.29]{TV2006}.
\begin{lemma} \label{prineq}
Let $A$ be a finite subset of some additive abelian group $G$. If $|A+A| \leq K|A|$, then for all non-negative integers $m,n$, we have
$$ |mA - nA| \leq K^{m+n}|A|.$$
\end{lemma}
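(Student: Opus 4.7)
The plan is to deduce this classical inequality by combining two well-known ingredients from additive combinatorics: Ruzsa's triangle inequality and Pl\"unnecke's inequality in the sharp form due to Petridis. Each step is short, so I would sketch proofs rather than merely cite the literature.

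First, I would establish \emph{Ruzsa's triangle inequality}: for any finite sets $X, Y, Z$ in an abelian group, $|X|\,|Y-Z| \leq |X-Y|\,|X-Z|$. For each $d \in Y-Z$, fix a representation $d = y(d) - z(d)$ with $y(d) \in Y$ and $z(d) \in Z$, and consider the map $\phi\colon X \times (Y-Z) \to (X-Y) \times (X-Z)$ given by $\phi(x,d) = (x - y(d),\, x - z(d))$. From the image $(u,v)$ one recovers $d = v-u$, hence $y(d)$ and $z(d)$, and finally $x = u + y(d)$; so $\phi$ is injective and the inequality follows.

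Second, I would prove \emph{Pl\"unnecke's inequality in Petridis's form}: if $|A+B| \leq K|A|$, then there exists a nonempty $A' \subseteq A$ with $|A' + nB| \leq K^{n}|A'|$ for every non-negative integer $n$. One takes $A'$ to be a nonempty subset of $A$ minimising the ratio $|X+B|/|X|$, sets $K' := |A'+B|/|A'| \leq K$, and establishes the one-step bound $|A' + B + C| \leq K'\,|A' + C|$ for every finite set $C$. This is the combinatorial heart of the argument: it uses the extremality of $A'$ to show that adjoining a further copy of $B$ cannot inflate the sumset by more than the factor $K'$. Iterating this bound with $C = (n-1)B,\, (n-2)B,\, \ldots,\, \emptyset$ yields $|A' + nB| \leq K'^{\,n}|A'| \leq K^{n}|A'|$.

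Finally, combining the two ingredients delivers the lemma. Taking $B = A$ in the Pl\"unnecke step produces a nonempty $A' \subseteq A$ with $|A' + jA| \leq K^{j}|A'|$ for all $j \geq 0$. Applying Ruzsa's triangle inequality with $X = -A'$, $Y = mA$, $Z = nA$ then yields
\[ |A'|\,|mA - nA| \leq |{-A'} - mA|\,|{-A'} - nA| = |A' + mA|\,|A' + nA| \leq K^{m+n}\,|A'|^{2}, \]
and dividing by $|A'|$ together with $|A'| \leq |A|$ gives the desired estimate $|mA - nA| \leq K^{m+n}|A|$. The main obstacle in the whole plan is the one-step Petridis bound; once that is in hand the remainder is routine bookkeeping.
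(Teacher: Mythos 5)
Your proposal is correct and is the standard modern proof of the Pl\"unnecke--Ruzsa inequality. However, the paper does not actually prove Lemma~\ref{prineq} at all: it simply cites it as \cite[Corollary 6.29]{TV2006}, so there is no proof in the paper to compare against. Your argument is therefore strictly more self-contained than what the paper offers. The structure you give --- Ruzsa's triangle inequality (with the injective map $\phi(x,d) = (x - y(d), x - z(d))$), Petridis's form of Pl\"unnecke's inequality (extract $A' \subseteq A$ minimising $|X+B|/|X|$, prove the one-step bound $|A'+B+C| \leq K'|A'+C|$, iterate), and then Ruzsa's triangle inequality applied with $X = -A'$, $Y = mA$, $Z = nA$ --- is exactly the route now standard in the literature, and the final bookkeeping $|A'|\,|mA - nA| \leq |A'+mA|\,|A'+nA| \leq K^{m+n}|A'|^2$ is correct. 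The one place your writeup stops short of a complete argument is the Petridis one-step bound $|A'+B+C| \leq K'|A'+C|$: you correctly identify it as the combinatorial heart, describe its mechanism accurately (extremality of the ratio), but do not actually carry out the induction/counting that establishes it. Since you explicitly flag this as the remaining obstacle, the proposal is honest about its own incompleteness; filling that in would make the proof complete.
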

We now use Lemma $\ref{prineq}$ on $S_2$ to show that for all $l \in \mathbb{N}$, we have
\begin{align*} |l S_2| \ & \leq \ |S_2|^{1 + \epsilon l/2} \ \leq \  \ | \Sigma(G)|^{1 + \epsilon l/2}  \\ & \leq \ \  \bigg( \frac{4}{\alpha} \bigg)^{1 + \epsilon l/2}  N^{\frac{k(k+1)}{2}(1 + \epsilon l/2)}   \end{align*}
Noting $\eqref{f2}$, we get that
\begin{equation*}   |l \mathscr{A'}| \ \leq \ |l S_2| \ \leq \  \bigg( \frac{4}{\alpha} \bigg)^{1 + \epsilon l/2} N^{\frac{k(k+1)}{2}(1 + \epsilon l/2)}.\end{equation*}
From $\eqref{f1}$, we see that
\[N = |\mathscr{A}| \leq ({\alpha}^{-\epsilon/4} |\mathscr{A'}|)^{1/(1 - 8\delta - k(k+1)\epsilon /4)}.  \]
Thus, we have
\begin{align*}
|l \mathscr{A'}| \ 
& \leq \ \bigg( \frac{4}{\alpha} \bigg)^{1 + \epsilon l/2} N^{\frac{k(k+1)}{2}(1 + \epsilon l/2)} \\
& \leq \  4^{1 + \epsilon l/2} \bigg(  \frac{1}{\alpha}  \bigg)^{P_1} |\mathscr{A'}|^{P_2}, 
\end{align*}
where
\[ P_1 =  \frac{ \epsilon k(k+1)(1 + \epsilon l/2)}{8(1 -  8\delta - k(k+1)\epsilon /4)} + 1 + \epsilon l/2, \]
and
\[ P_2 = \frac{k(k+1)}{2} \frac{(1 + \epsilon l/2)}{(1 -  8\delta - k(k+1)\epsilon /4)}.  \]  
After some elementary computations, we observe that
\[ P_1 = \frac{ \epsilon k(k+1)(1 + \epsilon l/2)}{8(1 -  8\delta - k(k+1)\epsilon /4)} + 1 + \epsilon l/2 \ \leq \ 1 + \epsilon(l+k^2),\]
and
\[ P_2 = \frac{k(k+1)}{2} \frac{(1 + \epsilon l/2)}{(1 -  8\delta - k(k+1)\epsilon /4)}  \  \leq \   \frac{k(k+1)}{2} (1 + \epsilon( l/2  +  k^2 )),   \]
for all $l \geq k(k+1)/2$, whenever $\epsilon$ is sufficiently small and $\delta$ is sufficiently small in terms of $\epsilon$.
Hence, we get
\[ |l \mathscr{A'}|   \leq \  \bigg(  \frac{1}{\alpha}  \bigg)^{1 + 2\epsilon(l+k^2)}  |\mathscr{A'}|^{\frac{k(k+1)}{2} (1 + \epsilon( l +  k^2 ))} \]
whenever $N$ is large enough, $\epsilon$ is sufficiently small and $\delta$ is sufficiently small in terms of $\epsilon$. Thus, we have proven Lemma $\ref{hypvmvt}$.

\section{Proof of Theorem \ref{vmvtsp}}

In this section, we prove Theorem $\ref{vmvtsp}$. Given a set $A \subseteq \mathbb{R} \setminus \{0\}$ satisfying the hypothesis of Theorem $\ref{vmvtsp}$, we can use the pigeonhole principle to find $A' \subseteq A$ such that $|A'| \geq |A|/2$ and 
\begin{equation} \label{psedit}
 a_1 a_2 > 0 \ \text{and} \ a_1/ a_2 > 0
 \end{equation}
for all $a_1, a_2 \in A'$. To prove Theorem $\ref{vmvtsp}$ for the set $A$, it suffices to show Theorem $\ref{vmvtsp}$ for the set $A'$. Thus, we can assume that $\eqref{psedit}$ holds for all $a_1, a_2 \in A$. 
\par

We begin by defining the multiplicative energy $M(A)$ as
\[ M(A) =  \bigg| \bigg\{ (a_1,a_2,a_3,a_4) \in A^4 \ \bigg| \ \frac{a_1}{a_2}  = \frac{a_3}{a_4}  \ \bigg\}  \bigg|.\]
Moreover, for all $x \in \mathbb{R}$, we write
\[ d(x) = |\{ (a_1,a_2) \in A^2 \ | \ x = a_1/a_2 \}|. \]
Note that 
\begin{equation} \label{meps}
 M(A) = \sum_{x \in A/A} d(x)^2, 
\end{equation}
and 
\begin{equation}  \label{basicp}
1 \leq d(x) \leq |A| \ \text{for all} \ x \in A/A .
\end{equation}
As in the previous sections, we can relate $M(A)$ and $|A/A|$ by applying the Cauchy-Schwarz inequality on $\eqref{meps}$. Thus, we have
\begin{equation} \label{mcs}
|A/A| M(A)  \geq \big(\sum_{x \in A/A} d(x) \big)^2 =  |A|^4. 
\end{equation}
Furthermore, from $\eqref{meps}$, we can write $M(A)$ as
\[ M(A) = \sum_{i=0}^{\ceil{\log|A|}} \sum_{\substack{x \in A/A \\ 2^i \leq x < 2^{i+1} }} d(x)^2,\] 
and by the pigeonhole principle, we see that there exists $I \geq 0$ such that
\[M(A) \leq 2{\log|A|} \sum_{\substack{x \in A/A \\ 2^I \leq d(x) < 2^{I+1} }} d(x)^2.\] 
Let 
\[ S = \{ x \in A/A \ | \ 2^I \leq d(x) < 2^{I+1} \} \ \text{and} \ L =  \sum_{x \in S} d(x)^2. \]
Thus we have
\begin{equation} \label{dp1}
 L =  \sum_{x \in S} d(x)^2 \geq M(A)/ 2\log{|A|}.
 \end{equation}
We use $n$ to denote $|S|$. Combining the above with the definition of $S$, we get that
\begin{equation} \label{dp2}
 L = \sum_{x \in S} d(x)^2 \leq n 2^{2I + 2}.
 \end{equation}
\par

From $\eqref{eq:uppbd1}$, we see that
\[ |s \mathscr{A}| \geq |A|^{ s -\epsilon} \ \text{for all} \ k \leq s \leq k(k+1)/2 .\]
Thus if $|A/A| \geq |A|^{1 + \delta_{s}}$, where $\delta_{s} = 1/(2s-1-2\epsilon)$, we are done. Consequently, we can assume that
\begin{equation} \label{thisis}
|A/A| < |A|^{1 + \delta_{s}}.
\end{equation}
\par

Furthermore, we can assume that $ n \geq \log|A|$, as otherwise, noting $\eqref{dp1}$ and $\eqref{basicp}$, we have
\[M(A) < 2n |A|^2  \log|A|  \leq 2|A|^2 (\log|A|)^2. \]
 The above, when combined with $\eqref{mcs}$, implies that
\[ |A/A| \geq |A|^4/ M(A) > |A|^{2} (2\log|A|)^{-2},  \]
which contradicts $\eqref{thisis}$ for $k \geq 2$. Thus we assume that  $ n \geq \log|A|$. 
\par
Moreover, we can see that when $k \geq 2$, we have $\delta_{s} < 1/2$, and consequently, $|A/A| < |A|^{3/2}$. Thus, combining $\eqref{mcs}$, $\eqref{dp2}$ and the fact that $n \leq |A/A| < |A|^{3/2}$, we get
\[ \frac{|A|^4}{2\log|A|} \leq \frac{M(A)}{2\log|A|}|A/A| \leq  L |A/A| < |A|^{3/2} n 2^{2I + 2} \leq |A|^{3}  2^{2I + 2}.  \] 
 This implies that 
\begin{equation} \label{est1}
 2^{I} \geq \frac{|A|^{1/2}}{8\log|A|} .
\end{equation}

We write 
\[ S =  \{ s_1, \dots, s_n \} \]
where $s_i < s_{i+1}$ for all $1 \leq i \leq n-1$. We now consider the sets 
\[ \l_i = \{ (a_1, a_1^2, \dots, a_1^k , a_2, a_2^2, \dots, a_2^k) \ | \  a_2/a_1 = s_i , \ a_1, a_2 \in A \  \}, \]
for $1 \leq i \leq n$. 
\par

\begin{lemma} \label{ul}
For all $u \in \mathbb{N}$ and $1 \leq i < j \leq n$, we have 
\[ |u l_i + u l_j| \gg |ul_i| |u l_j|. \] 
\end{lemma}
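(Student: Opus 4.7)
The plan is to prove the stronger statement $|u l_i + u l_j| = |u l_i| \cdot |u l_j|$ by exhibiting injectivity of the addition map $(V, W) \mapsto V + W$ from $u l_i \times u l_j$ onto $u l_i + u l_j$. This is a direct higher-degree analogue of Solymosi's classical $k=1$ observation that sums of points on two distinct lines through the origin admit a unique decomposition, and it is stronger than what the lemma requires.

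The first step is to record the structural property of $l_i$. Each $v \in l_i$ has the form $(a_1, a_1^2, \dots, a_1^k, s_i a_1, s_i^2 a_1^2, \dots, s_i^k a_1^k)$, so its coordinates satisfy the linear relations $v_{k+m} = s_i^m v_m$ for every $1 \leq m \leq k$. Because these relations are linear in $v$, they are preserved under sums: every $V \in u l_i$ satisfies $V_{k+m} = s_i^m V_m$, and every $W \in u l_j$ satisfies $W_{k+m} = s_j^m W_m$.

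Suppose now that $V + W = V' + W'$ with $V, V' \in u l_i$ and $W, W' \in u l_j$. Comparing the $m$-th and $(k+m)$-th coordinates for each $1 \leq m \leq k$ yields the system
\begin{align*}
V_m + W_m &= V_m' + W_m',\\
s_i^m V_m + s_j^m W_m &= s_i^m V_m' + s_j^m W_m',
\end{align*}
whose determinant equals $s_j^m - s_i^m$. The sign normalisation $\eqref{psedit}$ ensures $s_i, s_j > 0$, and since $s_i < s_j$ we have $s_j^m - s_i^m \neq 0$ for every $m \geq 1$. Hence $V_m = V_m'$ and $W_m = W_m'$ for each $1 \leq m \leq k$, whereupon the structural relations $V_{k+m} = s_i^m V_m$ and $W_{k+m} = s_j^m W_m$ force $V = V'$ and $W = W'$. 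Injectivity is established, giving $|u l_i + u l_j| = |u l_i||u l_j|$. The only subtlety is to justify $s_i^m \neq s_j^m$, for which the positivity of $s_i$ and $s_j$ is essential; this is precisely the purpose of the sign normalisation $\eqref{psedit}$ performed at the start of the section, and I do not anticipate any further obstacles.
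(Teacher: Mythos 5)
Your proof is correct and follows essentially the same argument as the paper: both exploit the linear relations $V_{k+m}=s_i^m V_m$ on $ul_i$ (and similarly for $ul_j$) together with $s_i^m\neq s_j^m$ to force uniqueness of decompositions, differing only in that you phrase the final step as a $2\times 2$ determinant while the paper substitutes one equation into the other. Your explicit appeal to the sign normalisation to guarantee $s_i^m\neq s_j^m$ for all $m$ is a welcome bit of care that the paper leaves implicit.
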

\begin{proof}
Let $x_1, x_2 \in u l_i$ and $x_3, x_4 \in u l_j$. Thus
\[  x_v = (\sum_{r=1}^{u} a_{vr},\dots,\sum_{r=1}^{u} a_{vr}^{k},  s_i \sum_{r=1}^{u} a_{vr}, \dots, s_i^k \sum_{r=1}^{u} a_{vr}^k  ) \ \text{for} \ v \in \{1,2\} ,\] 
and 
\[  x_v = (\sum_{r=1}^{u} a_{vr},\dots,\sum_{r=1}^{u} a_{vr}^{k},  s_j \sum_{r=1}^{u} a_{vr}, \dots, s_j^k \sum_{r=1}^{u} a_{vr}^k  ) \ \text{for} \ v \in \{3,4\}, \]
for some $a_{11}, \dots, a_{4u} \in A$. 
\par
If $x_1 + x_3 = x_2 + x_4,$ then $x_1 - x_2 = x_4 - x_3$, which is equivalent to the following system of equations
\begin{align*} 
&  \sum_{r=1}^{u} (a_{1r}^q - a_{2r}^q) = \sum_{r=1}^{u} (a_{4r}^q - a_{3r}^q) \ \ (1 \leq q \leq k) \\
 s_i^q & \sum_{r=1}^{u} (a_{1r}^q - a_{2r}^q) = s_j^q \sum_{r=1}^{u} (a_{4r}^q - a_{3r}^q) \ \ (1 \leq q \leq k).
 \end{align*}
As $i < j$, we have $s_i  < s_j$, and thus, the above system of equations is equivalent to
\begin{align*} 
&  \sum_{r=1}^{u} (a_{1r}^q - a_{2r}^q) = 0 \ \ (1 \leq q \leq k) \\
&  \sum_{r=1}^{u} (a_{4r}^q - a_{3r}^q) = 0 \ \ (1 \leq q \leq k).
 \end{align*}
This implies that $x_1 = x_2$ and $x_3 = x_4$, and thus all the sums of the form $x + y$ with $x \in ul_i$ and $y \in ul_j$, are distinct. This proves Lemma $\ref{ul}$.
\end{proof}

\begin{lemma} \label{vmvtl}
Let $1 \leq u \leq k(k+1)/2$ and $\epsilon$ be a positive real number. Then 
\[ |ul_i| \gg |l_i|^{u - \epsilon}. \]
\end{lemma}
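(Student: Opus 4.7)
The plan is to recognise the $u$-fold sumset $u l_i$ as the Minkowski sumset of a Veronese-type image of a subset of $A$, thereby reducing the lemma to the Cauchy--Schwarz-plus-decoupling argument already used in Section \ref{prelim}.

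First, I would introduce the set $A_i = \{ a \in A \ | \ s_i a \in A \}$. Then the map $a \mapsto (a, a^2, \dots, a^k, s_i a, s_i^2 a^2, \dots, s_i^k a^k)$ sets up a bijection between $A_i$ and $l_i$, so $|l_i| = |A_i|$. For the $u$-fold sumset I would project onto the first $k$ coordinates: every element of $u l_i$ has its last $k$ coordinates obtained from its first $k$ by multiplying the $q$-th by $s_i^q$, a fixed non-zero scalar, so this projection is injective. Hence $|u l_i| = |u \mathscr{A}_i|$, where $\mathscr{A}_i = \{ (a, a^2, \dots, a^k) \ | \ a \in A_i \}$.

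The same Cauchy--Schwarz argument used to derive \eqref{eq:lowbd2} gives $|u \mathscr{A}_i| \geq |A_i|^{2u}/J_{u,k}(A_i)$. Since $A_i \subseteq A$ is well-spaced with $X_{A_i} \leq X_A \leq |A|^m$, Theorem \ref{BDG1} yields
\[ J_{u,k}(A_i) \leq |A|^{m\eta}\bigl(|A_i|^u + |A_i|^{2u - k(k+1)/2}\bigr) \]
for any $\eta > 0$ (once $|A|$, and hence $|A_i|$, is sufficiently large). The hypothesis $u \leq k(k+1)/2$ makes the first term dominate, so
\[ |u l_i| = |u \mathscr{A}_i| \gg |A|^{-m\eta} |l_i|^u. \]

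The remaining task is to absorb the $|A|^{m\eta}$ factor into $|l_i|^{\epsilon}$. Because $s_i$ is chosen from the dyadic level set $S$, we have $|l_i| = d(s_i) \geq 2^I \geq |A|^{1/2}/(8\log|A|)$ by \eqref{est1}, so $|A|^{m\eta}$ is bounded by a small power of $|l_i|$. Taking $\eta$ to be a small multiple of $\epsilon/m$ and using that $|A|$ is large then delivers $|u l_i| \gg |l_i|^{u - \epsilon}$, completing the plan. The main obstacle is precisely this final step: without a polynomial lower bound on $|l_i|$ in terms of $|A|$, the sparsity factor $X_{A_i}^{\eta}$ from Theorem \ref{BDG1} could not be converted into $|l_i|^{\epsilon}$, and the dyadic pigeonholing earlier in this section is what supplies the requisite lower bound.
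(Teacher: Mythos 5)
Your proposal is correct and follows essentially the same route as the paper: you reduce $|ul_i|$ to $|u\mathscr{A}_i|$ for the one-dimensional set $A_i$ (the paper calls it $p_i$), apply Cauchy--Schwarz together with Theorem~\ref{BDG1} on $A_i$, and then use the lower bound \eqref{est1} on $|l_i|=d(s_i)$ to absorb the $X_A^{\eta}$ loss into $|l_i|^{\epsilon}$. The only cosmetic difference is that you observe the projection $ul_i\to u\mathscr{A}_i$ is a bijection, whereas the paper only records the one inequality $|ul_i|\geq |u p_{i,k}|$, which is all that is needed.
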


\begin{proof}
We write
\[ p_{i,k} = \{ (a_1, a_1^2, \dots, a_1^k ) \ | \ a_1 \in A \ \text{such that there exists} \ a_2 \in A \ \text{with} \ a_2/a_1 = s_i \}, \]
and
\[ p_i = \{ a_1 \ | \ a_1 \in A \ \text{such that there exists} \ a_2 \in A \ \text{with} \ a_2/a_1 = s_i \}.  \] 
We observe that $|l_i| = |p_{i,k}| = |p_i|$ and $|ul_i| \geq |u p_{i,k}|$. Moreover $p_i \subseteq A$ and thus $X_{p_i} \leq X_A \leq |A|^m$. We now apply Theorem $\ref{BDG1}$ to the set $p_i$ to get
\[ J_{u,k}(p_i) \leq |A|^{\epsilon} |p_i|^{u}, \]
for all $\epsilon > 0$. We apply the Cauchy-Schwarz inequality to this, and get 
\[ |up_{i,k}| \geq |A|^{-\epsilon} |p_i|^{u}.\] 
Noting $\eqref{est1}$, we see that
\[ |p_i| = d(s_i) \geq 2^{I} \gg |A|^{1/3}. \]
Thus, we have
\[ |u l_i| \geq |u p_{i,k}| \gg |A|^{-\epsilon} |p_i|^{u} \gg  |p_i|^{u - 3\epsilon} = |l_i|^{u - 3\epsilon}. \]
We conclude our proof by appropriately rescaling $\epsilon$. 
\end{proof}

We will now consider the sumsets $ul_{i} + ul_{i+1}$ for $1 \leq i \leq n-1$ and prove the following lemma.

\begin{lemma} \label{pnp}
Let $1 \leq i < j \leq n-1$. Then $ul_{i} + ul_{i+1}$ and $ul_{j} + ul_{j+1}$ are disjoint sets. 
\end{lemma}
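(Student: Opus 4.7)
The plan is to run a projection argument in the spirit of Solymosi \cite{So2009}: project each element of $\mathbb{R}^{2k}$ onto the first and $(k+1)$-th coordinates, and exploit the fact that the coordinate pair forces each point of $u l_i + u l_{i+1}$ to have ``slope through the origin'' lying strictly in the open interval $(s_i, s_{i+1})$. Since these intervals for adjacent index pairs are pairwise disjoint, the sumsets themselves cannot overlap.

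First I would invoke the reduction at the start of \S5 to assume that every element of $A$ has the same sign (say positive; the negative case is identical after negating $A$). For any $v \in l_i$, writing $v = (a, a^2, \dots, a^k, s_i a, s_i^2 a^2, \dots, s_i^k a^k)$ for some $a \in A$, the pair $(v_1, v_{k+1}) = (a, s_i a)$ lies on the ray through the origin of slope $s_i$ in the open upper-right quadrant. Summing any $u$ such vectors, every $x \in u l_i$ satisfies $x_{k+1} = s_i \, x_1$ with $x_1 > 0$.

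Now suppose for contradiction that $z \in (u l_i + u l_{i+1}) \cap (u l_j + u l_{j+1})$ for some $1 \le i < j \le n-1$, and write $z = x + y = x' + y'$ with $x \in u l_i$, $y \in u l_{i+1}$, $x' \in u l_j$, $y' \in u l_{j+1}$. Setting $U = x_1, V = y_1, U' = x_1', V' = y_1'$ (all strictly positive), the first and $(k+1)$-th coordinates of $z$ yield
\[ z_1 = U + V = U' + V' \quad \text{and} \quad z_{k+1} = s_i U + s_{i+1} V = s_j U' + s_{j+1} V'. \]
Dividing, the ratio $z_{k+1}/z_1$ is simultaneously a convex combination with strictly positive weights of $\{s_i, s_{i+1}\}$ and of $\{s_j, s_{j+1}\}$, so it lies in both $(s_i, s_{i+1})$ and $(s_j, s_{j+1})$. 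However, since $s_1 < s_2 < \dots < s_n$ and $i + 1 \le j$, we have $s_{i+1} \le s_j$, so these open intervals are disjoint, a contradiction.

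The main (and essentially only) obstacle is ensuring that $U, V, U', V'$ are all strictly positive, so that the convex-combination step places the slope $z_{k+1}/z_1$ strictly inside each interval; without this positivity the argument collapses, since mixed signs would allow the ratio to exit $(s_i, s_{i+1})$. This is precisely what the sign-pigeonholing step at the beginning of \S5 was set up to provide, so the proof runs smoothly.
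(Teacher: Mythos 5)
Your proof is correct and follows essentially the same approach as the paper: project onto the first and $(k+1)$-th coordinates, observe that each sumset $ul_i + ul_{i+1}$ sits in the open cone with boundary slopes $s_i$ and $s_{i+1}$, and use the strict ordering $s_{i+1} \leq s_j$ to conclude disjointness. The paper phrases this by introducing auxiliary sets $m_r$ of coordinate pairs and stating the slope inequality $s_i x < y < s_{i+1} x$, but the underlying convex-combination/Solymosi-style argument and the role of the sign-pigeonholing from the start of Section~5 are identical to what you wrote.
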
 

\begin{proof}
We prove this lemma by contradiction. Let $(ul_{i} + ul_{i+1})\cap(ul_{j} + ul_{j+1})$ be non-empty. In particular, if we define
\[ m_{r} = \{ (a_1,a_2) \ | \  a_2/a_1 = s_r , \ a_1, a_2 \in A \  \}, \]
for all $1 \leq r \leq n$, then we have that $(um_{i} + um_{i+1})\cap(um_{j} + um_{j+1})$ is non-empty. But if $(x,y) \in \mathbb{R}^2$ such that $(x,y) \in um_{i} + um_{i+1}$, then we have
\[ s_i x < y < s_{i+1} x. \]
Combining the above with the fact that $s_i < s_j$, we see that $(um_{i} + um_{i+1})\cap(um_{j} + um_{j+1}) = \emptyset$. Thus we are done.
\end{proof}

Firstly, we note that \[ ul_i + ul_j \subseteq (u \mathscr{A} + u \mathscr{A}) \times (u \mathscr{A} + u \mathscr{A}), \]
for all $u \in \mathbb{N}$ and $1 \leq i < j \leq n$. With Lemma $\ref{pnp}$ in hand, we have
\[ |u \mathscr{A} + u \mathscr{A}|^2 \ \geq \ \sum_{i=1}^{n-1} | u l_i + u l_{i+1} |. \]
From Lemma $\ref{ul}$, we know that
\[ \sum_{i=1}^{n-1} | u l_i + u l_{i+1} | \ \gg \ \sum_{i=1}^{n-1} |ul_i||u l_{i+1}|. \]
We use Lemma $\ref{vmvtl}$ to get
\[\sum_{i=1}^{n-1} |ul_i||u l_{i+1}| \ \gg \ \sum_{i=1}^{n-1} (|l_i| |l_{i+1}|)^{u - \epsilon}. \] 
We combine this with the fact that $|l_i| = d(s_i) \in [2^{I}, 2^{I+1})$ and $\eqref{dp2}$ to get
\[  \sum_{i=1}^{n-1} (|l_i| |l_{i+1}|)^{u - \epsilon} \ \gg \ n (2^{2I})^{u - \epsilon} \ = \ \frac{n(2^{2I+2}n)^{u - \epsilon}}{(4n)^{u - \epsilon}}  \ \geq \frac{n}{(4n)^{u - \epsilon}} L^{u - \epsilon}. \] 
From the above discussion, we see that
\[ |u \mathscr{A} + u \mathscr{A}|^2 \gg \frac{n}{(4n)^{u - \epsilon}} L^{u - \epsilon}. \] 
This gives us
\[  |u \mathscr{A} + u \mathscr{A}|^{2/(u - \epsilon)} n^{1 - 1/(u - \epsilon)} \gg L. \]
\par

As in the hypothesis of Theorem $\ref{vmvtsp}$, we choose $2 \leq k \leq u \leq k(k+1)/2$. Combining the above with $\eqref{dp1}$ and the fact that $n \leq |A/A|$, we get
\[  |u \mathscr{A} + u \mathscr{A}|^{2/(u - \epsilon)} |A/A|^{1 - 1/(u - \epsilon)} \log|A|  \gg M(A) \]
Using $\eqref{mcs}$ with this, we get
\[ |A|^4 \leq M(A) |A/A| \ll |u \mathscr{A} + u \mathscr{A}|^{2/(u - \epsilon)} |A/A|^{2 - 1/(u - \epsilon)} \log|A|,  \] 
which gives us
\[ |A|^{2u - 2\epsilon}  \ll  |u \mathscr{A} + u \mathscr{A}| |A/A|^{u - 1/2 - \epsilon}.  \]
Hence, we have proven Theorem $\ref{vmvtsp}$.

\section{Variants and Further Applications}

In this section, we show a variant of Theorem $\ref{bsgvmvt}$. We define $\mathcal{U} \subseteq \mathbb{R}$ to be the set of all $\lambda$ such that
\[ J_{s,k}(A) \leq |A|^{2s- k(k+1)/2} |A|^{\lambda}  \]
for all $(s,k) \in \mathbb{N}^2$ satisfying $s \geq k(k+1)/2$ and for all finite, non-empty sets $A \subseteq \mathbb{Z}$ where $|A|$ is large enough with respect to $\lambda$, $s$ and $k$. We note that $\mathcal{U}$ is non-empty as 
\[ J_{s,k}(A) \leq |A|^{2s-k}\]
for all choices of $s,k$ with $s \geq k(k+1)/2$ and $A \subseteq \mathbb{Z}$. Thus $k(k+1)/2 - k \in \mathcal{U}$. We write
\[ \Lambda = \inf \mathcal{U}. \] 
In other words, $\Lambda$ is the smallest real number such that for any choice of $s,k \in \mathbb{N}^2$ satisfying $s \geq k(k+1)/2$ and $\epsilon > 0$, we have
\begin{equation} \label{lamdef}
 J_{s,k}(A) \leq |A|^{2s-k(k+1)/2} |A|^{\Lambda + \epsilon} 
 \end{equation}
for all sets $A$ such that $|A|$ is large enough with respect to $\Lambda$, $s$, $k$ and $\epsilon$. Furthermore, we see that $\Lambda \geq 0$ from the note following $\eqref{lowbd}$. Given a set $A$ with a large $J_{s,k}(A)$, we can prove a result similar to Theorem $\ref{bsgvmvt}$.

\begin{theorem} \label{absvmvt}
Let $s,k \in \mathbb{N}^2$ such that $s \geq k(k+1)$, let $\epsilon$ be a sufficiently small positive real number and let $\Lambda$ be defined as above. Let $A \subseteq \mathbb{Z}$ be a non-empty, finite set such that $|A|$ is large enough and let $\alpha \leq 1$ be any real number such that 
\[ \log{\alpha^{-1}}  \leq \frac{ \epsilon(k(k+1)- 2\Lambda)}{43200}\log{|A|}  \ \text{and} \ J_{s,k}(A) = \alpha |A|^{2s-k(k+1)/2 + \Lambda}. \]
Then there exists $A' \subseteq A$ with $|{A'}| \geq {\alpha}^{\epsilon/4} |A|^{1 - \epsilon k^2}$,
such that for all $l \in \mathbb{N}$, we have
\[ |l \mathscr{A'}| \ \leq \  {\alpha}^{-(1 + 2\epsilon(l+k^2))}  |A'|^{(\frac{k(k+1)}{2} - \Lambda) (1 + \epsilon( l +  k^2 ))} , \]
where $\mathscr{A'} = \{ (a,a^2,\dots,a^k) \ | \ a \in A'\}$. 
\end{theorem}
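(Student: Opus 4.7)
The plan is to adapt the proofs of Theorem $\ref{bsgvmvt}$ and Lemma $\ref{hypvmvt}$ to the new setting by replacing the external inputs that depended on the diameter restriction $X_A \leq |A|^m$ with the single definitional bound provided by $\Lambda$. Concretely, wherever the exponent $k(k+1)/2$ appears as the ``extremal'' Vinogradov exponent, it is replaced by $k(k+1)/2 - \Lambda$.

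First, I will establish an analogue of Lemma $\ref{folklore}$: for $s \geq k(k+1)$ and any $\delta > 0$,
\[ r(\mathscr{A}^s) \leq |A|^{s - (k(k+1)/2 - \Lambda) + \delta}, \]
provided $|A|$ is sufficiently large. For $s$ even, Cauchy-Schwarz gives $r(\mathscr{A}^s; \vec{n}) \leq J_{s/2,k}(A)$, and since $s/2 \geq k(k+1)/2$ the definition of $\Lambda$ yields $J_{s/2,k}(A) \leq |A|^{s - k(k+1)/2 + \Lambda + \delta}$; the odd case follows by peeling off one coordinate. This argument requires no diameter bound, only that $A \subseteq \mathbb{Z}$ (which is automatically well-spaced).

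Next, I will repeat the double-counting arithmetic of Section $\ref{proofth}$ with the threshold in $\eqref{eq:defS}$ replaced by $\tfrac{1}{2}\alpha N^{s - k(k+1)/2 + \Lambda}$ and with the new hypothesis on $J_{s,k}(A)$. The identical chain of inequalities then produces a set $G \subseteq \mathscr{A}^s$ with $|G| \geq \tfrac{\alpha}{2} N^{s - \delta}$ and $|\Sigma(G)| \leq \tfrac{4}{\alpha} N^{k(k+1)/2 - \Lambda}$. I then state a variant of Lemma $\ref{hypvmvt}$ whose sumset hypothesis reads $|\Sigma(G)| \leq \tfrac{4}{\alpha} |A|^{k(k+1)/2 - \Lambda}$ and whose conclusion has $k(k+1)/2 - \Lambda$ in the exponent of $|A'|$, from which Theorem $\ref{absvmvt}$ follows immediately.

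The proof of this variant of Lemma $\ref{hypvmvt}$ will follow Section $\ref{prooflm}$ essentially line by line. Every combinatorial step---popularity pruning, Proposition $\ref{prebsg}$, the application of the Balog-Szemer\'edi-Gowers theorem, and the Pl\"unnecke-Ruzsa bookkeeping that produces the exponents $P_1$ and $P_2$---is insensitive to the substitution. The only subtle point is the contradiction step inside Lemma $\ref{idkw}$, where $\eqref{eq:uppbd1}$ must be swapped for $J_{s/2,k}(A) \leq |A|^{s - k(k+1)/2 + \Lambda + \epsilon_0}$ from the definition of $\Lambda$; a repeat of the computation produces an excess factor in the exponent of $N$ proportional to $(k(k+1) - 2\Lambda)\epsilon$, which is exactly why the hypothesis is calibrated as $\log \alpha^{-1} \leq \epsilon(k(k+1) - 2\Lambda)/43200 \cdot \log |A|$. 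The main obstacle is purely bookkeeping: one must verify that the substitution $k(k+1)/2 \mapsto k(k+1)/2 - \Lambda$ is uniformly preserved through the final estimates for $P_1$ and $P_2$, which holds because $\Lambda \geq 0$ keeps all the relevant quantities nonnegative.
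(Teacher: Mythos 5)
Your proposal is correct and follows the same route the paper sketches: carry out the proof of Theorem~$\ref{bsgvmvt}$ and Lemma~$\ref{hypvmvt}$ verbatim, but with the definitional bound~$\eqref{lamdef}$ substituted for~$\eqref{eq:uppbd1}$, which replaces $k(k+1)/2$ by $k(k+1)/2 - \Lambda$ in the threshold~$\eqref{eq:defS}$, the bound~$\eqref{SumG1}$, the contradiction in Lemma~$\ref{idkw}$, and the final exponents $P_1$, $P_2$. Where the paper is terse, you supply the right concrete argument: the Cauchy--Schwarz inequality $r(\mathscr{A}^{2t};\vec{n}) \le J_{t,k}(A)$ is a clean way to prove the $\Lambda$-analogue of Lemma~$\ref{folklore}$ without the exponential-sum machinery (which is unavailable here since the diameter restriction is dropped), and you correctly identify that the calibration $\log\alpha^{-1} \le \epsilon(k(k+1)-2\Lambda)\log|A|/43200$ is exactly what makes the contradiction in the Lemma~$\ref{idkw}$ analogue close, using $\Lambda \le k(k+1)/2-k$ to keep $k(k+1)-2\Lambda$ positive.
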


The proof of Theorem $\ref{absvmvt}$ follows along the same lines as the proof of Theorem $\ref{bsgvmvt}$ with a few minor changes. This mainly consists of using estimate $\eqref{lamdef}$ instead of $\eqref{eq:uppbd1}$ when trying to prove $\eqref{rep}$ and Lemma $\ref{idkw}$, which changes the $k(k+1)/2$ term to $k(k+1)/2 - \Lambda$ in all the suitable places. This is reflected in the conclusion of Theorem  $\ref{absvmvt}$. Moreover, as the condition $X_A \leq |A|^{m}$ is required for the validity of $\eqref{eq:uppbd1}$ but not for $\eqref{lamdef}$, we do not need to impose diameter constraints on our choice of sets in Theorem $\ref{absvmvt}$. \par

Theorem $\ref{absvmvt}$ has applications similar to Theorem $\ref{main}$ as well. We first define the $l$-fold product set $A^{(l)}$ of $A$ to be 
\[A^{(l)}  = \{  a_1 a_2 \dots a_l \ | \ a_i \in A \ (1 \leq i \leq l) \}. \] 
Let $\epsilon> 0$ and $l \in \mathbb{N}$ be parameters that will be fixed later. Let $A'$ be a finite subset of integers such that $|A'|$ is large enough and 
\begin{equation} \label{assum1}
 |l \mathscr{A'}| \ \leq \    |A'|^{(k(k+1)/2 - \Lambda)(1 + \epsilon( l +  k^2 ))}.
 \end{equation}
In this case, we make a simple observation that
\[ |l A'| \leq |l \mathscr{A'}| \leq |A'|^{(k(k+1)/2 - \Lambda)(1 + \epsilon( l +  k^2 ))}. \] 
We will now use the following result of Bourgain and Chang \cite{BC2005}.

\begin{lemma} \label{bcsp}
For all $b \in \mathbb{N}$, there exists $l = l(b) \in \mathbb{N}$ such that if $A \subseteq \mathbb{Z}$ is any finite
set, with $|A| \geq 2$, then 
\[ \sup{(|A^{(l)}|, |lA|)} > |A|^{b}. \]
Moreover, $l$ can be chosen to be $C^{b^4}$ for some constant $C$. 
\end{lemma}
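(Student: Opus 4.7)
The plan, following Bourgain and Chang \cite{BC2005}, is a proof by contradiction. Suppose that for some $b \in \mathbb{N}$ and arbitrarily large $l$, one can find a finite set $A \subseteq \mathbb{Z}$ with $|A| \geq 2$ satisfying both $|lA| \leq |A|^b$ and $|A^{(l)}| \leq |A|^b$; the goal is to force a contradiction once $l$ exceeds a universal constant times $b^4$.

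First I would invoke the Pl\"unnecke--Ruzsa inequality (Lemma \ref{prineq}) in both the additive group $\mathbb{Z}$ and its multiplicative analogue $\mathbb{Q}^{\times}$, together with Ruzsa's covering lemma, in order to convert the $l$-fold hypotheses into uniform doubling estimates
\[ |A+A| \leq K|A| \quad \text{and} \quad |AA| \leq K|A|, \]
where $\log K \ll (b/l)\log|A|$. For $l$ sufficiently large in terms of $b$, this forces $A$ to be simultaneously an additive and a multiplicative approximate subgroup of $\mathbb{Q}^{\times}$ with doubling constant close to $1$.

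The heart of the argument is showing that such joint structure cannot coexist in $\mathbb{Z}$ when $|A|$ is large. The approach of \cite{BC2005} performs a dyadic pigeonhole decomposition of the multiplicative energy of $A$, extracts a large subset $A'$ concentrated on a single level set of the ratio function $(a,a') \mapsto a/a' \in A/A$, and then runs a Solymosi-style comparison of translated sumsets along the resulting pencil of lines through the origin. Combined with iterated Pl\"unnecke, this produces either a lower bound on $|lA|$ or on $|A^{(l)}|$ that violates the smallness assumptions once the number of iterations (and hence $l$) is taken to be $C^{b^4}$.

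The main obstacle is the quantitative calibration. Each iteration of the energy extraction loses polynomial factors in $K$ and logarithmic factors in $|A|$, and one must track how these losses compound through the pigeonholing, the Solymosi step, and the repeated Pl\"unnecke applications. Achieving the explicit exponent $4$ on $b$ (rather than a larger power) is the genuinely delicate part of the proof and rests on the refined Pl\"unnecke--Ruzsa bookkeeping and multilinear sum-product estimates developed in \cite{BC2005}; a self-contained reproduction of that analysis lies well beyond the scope of the present paper, so we are content to quote the result.
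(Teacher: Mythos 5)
The paper does not prove this lemma---it quotes it verbatim as a result of Bourgain and Chang \cite{BC2005}, and your ultimate decision to do the same is the right call and matches the paper exactly. So in that sense the proposal is fine.

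However, the proof sketch you give of what \cite{BC2005} actually does is substantially inaccurate, and since you present it as a summary of their argument it is worth correcting. You describe the heart of the argument as a dyadic pigeonholing of multiplicative energy followed by a ``Solymosi-style comparison of translated sumsets along a pencil of lines.'' This is anachronistic: Solymosi's geometric argument appeared in 2009, five years after the Bourgain--Chang paper, and is not used there. The actual engine of \cite{BC2005} is a multiplicative Freiman-type structure theorem: assuming $|A^{(l)}| \leq |A|^b$, Pl\"unnecke--Ruzsa in $\mathbb{Q}^\times$ forces $A$ to have a large subset contained in the multiplicative group generated by a bounded number of primes (equivalently, inside a generalized geometric progression of bounded rank). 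One then shows that finite sets of integers whose prime support has bounded size cannot have iterated sumsets that stay polynomially small---a counting argument about exponent vectors, not a sum-product incidence argument. The iterated Pl\"unnecke bookkeeping you mention does appear, and the overall ``joint additive-multiplicative structure is impossible'' framing is correct, but the mechanism that produces the contradiction is the prime-support reduction plus growth of sums of lacunary-type sets, not energy pigeonholing plus Solymosi. If you want to include a sketch, it should be rewritten along those lines; otherwise, simply citing the result with no sketch, as the paper does, is cleaner and avoids misattributing the argument.
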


We set $b = \ceil{ k(k+1) - 2\Lambda}$ and $l = C^{b^4}$ in $\eqref{assum1}$, and thus, see that 
\[ |l A'| \leq |A'|^{(k(k+1)/2 - \Lambda)(1 + \epsilon( l +  k^2 ))} < |A'|^{b},  \]
when $\epsilon$ is small enough with respect to $k$ and $\Lambda$.  
Using Lemma $\ref{bcsp}$, we infer that
\[ |{A'}^{(l)}| > |A'|^{b}. \] 
We now apply Lemma $\ref{prineq}$ to get
\[ (|A'A'||A'|^{-1})^{l} \geq  |{A'}^{(l)}||A'|^{-1} > |A'|^{b-1}, \]
which further implies that
\[ |A'A'|> |A'|^{1 + (b-1)/l} .\] 
We note that by definition of $\Lambda$, we have $\Lambda \leq k(k+1)/2 - k$, which implies that $b \geq 2k$, and thus $(b-1)/l$ is a positive real number depending only on $k$ and $\Lambda$. Suppose $A' \subseteq A$ such that $|A'| \geq |A|^{1 - \epsilon k^2}$. Then we would have
\[ |AA| \geq |A'A'| > |A'|^{1 + (b-1)/l} \geq |A|^{(1 + (b-1)/l)(1- \epsilon k^2)} \geq |A|^{1 + (b-1)/2l},\] 
when $\epsilon$ is small enough. We combine this with the conclusion of Theorem $\ref{absvmvt}$ and record this below.

\begin{theorem} \label{absmain}
Let $s,k \in \mathbb{N}^2$ such that $s \geq k(k+1)$, let $\epsilon$ be a sufficiently small positive real number and let $\Lambda$ be defined as earlier. Let $A \subseteq \mathbb{Z}$ be a non-empty, finite set such that $|A|$ is large enough and let $\alpha \leq 1$ be any real number such that 
\[ \log{\alpha^{-1}}  \leq \frac{ \epsilon(k(k+1)- 2\Lambda)}{43200}\log{|A|}  \ \text{and} \ J_{s,k}(A) = \alpha |A|^{2s-k(k+1)/2 + \Lambda}. \]
Then we have
\[ |AA| > |A|^{1 + \delta}, \]
where $\delta$ is some positive constant only depending on $k$ and $\Lambda$. 
\end{theorem}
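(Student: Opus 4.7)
The plan is to combine Theorem \ref{absvmvt} with Lemma \ref{bcsp} and Lemma \ref{prineq}, following the strategy sketched in the discussion immediately preceding the statement. The underlying idea is that Theorem \ref{absvmvt} forces the iterated sumset $l\mathscr{A'}$ of a large subset $A' \subseteq A$ to be small, and projecting to the first coordinate transfers this to smallness of $lA'$; Bourgain--Chang then forces the iterated product set ${A'}^{(l)}$ to be large, after which a multiplicative application of Pl\"unnecke--Ruzsa propagates this to growth of $AA$.

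Concretely, first apply Theorem \ref{absvmvt} to extract $A' \subseteq A$ with $|A'| \geq \alpha^{\epsilon/4} |A|^{1 - \epsilon k^2}$ and
$$ |l\mathscr{A'}| \leq \alpha^{-(1 + 2\epsilon(l + k^2))} |A'|^{(k(k+1)/2 - \Lambda)(1 + \epsilon(l + k^2))} $$
for every $l \in \mathbb{N}$. Set $b = \lceil k(k+1) - 2\Lambda \rceil$ and $l = C^{b^4}$ as in Lemma \ref{bcsp}. Since $|lA'| \leq |l\mathscr{A'}|$ via projection onto the first coordinate, and since for $\epsilon$ small in terms of $k$ and $\Lambda$ the exponent $(k(k+1)/2 - \Lambda)(1 + \epsilon(l+k^2))$ lies strictly below $b \geq 2(k(k+1)/2 - \Lambda)$, while the hypothesis on $\log \alpha^{-1}$ ensures $\alpha^{-(1 + 2\epsilon(l+k^2))} = |A|^{O(\epsilon)}$, we deduce $|lA'| < |A'|^b$ once $|A|$ is large. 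Lemma \ref{bcsp} then forces $|{A'}^{(l)}| > |A'|^b$, and Lemma \ref{prineq} applied in the multiplicative group $\mathbb{Q} \setminus \{0\}$ to the set $A'$ yields
$$ \left(\frac{|A'A'|}{|A'|}\right)^{l} \geq \frac{|{A'}^{(l)}|}{|A'|} > |A'|^{b-1}, $$
so $|A'A'| > |A'|^{1 + (b-1)/l}$. Since $|A'| \geq \alpha^{\epsilon/4} |A|^{1 - \epsilon k^2}$ with $\alpha^{\epsilon/4} = |A|^{-O(\epsilon^2)}$ by hypothesis, this translates into $|AA| \geq |A'A'| > |A|^{1 + \delta}$ for, say, $\delta = (b-1)/(2l)$, a positive constant depending only on $k$ and $\Lambda$.

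The main obstacle is purely quantitative bookkeeping: one must verify that $\epsilon$ can be chosen small enough (in terms of $k$ and $\Lambda$, and hence in terms of $l$, which itself depends only on $k$ and $\Lambda$) so that the exponent in Theorem \ref{absvmvt} falls genuinely below $b$, and that the $\alpha$-factors remain $|A|^{o(1)}$. Both are routine checks given the hypothesis $\log \alpha^{-1} \leq \tfrac{\epsilon(k(k+1) - 2\Lambda)}{43200}\log|A|$, and no essentially new idea beyond Theorem \ref{absvmvt}, Lemma \ref{bcsp}, and Lemma \ref{prineq} is required.
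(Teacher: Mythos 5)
Your proposal is correct and follows essentially the same route as the paper: apply Theorem \ref{absvmvt} to extract $A'$, project $l\mathscr{A'}$ onto the first coordinate to control $|lA'|$, invoke the Bourgain--Chang result (Lemma \ref{bcsp}) with $b = \lceil k(k+1) - 2\Lambda\rceil$ to force $|{A'}^{(l)}|$ to be large, then apply the multiplicative Pl\"unnecke--Ruzsa inequality and translate from $|A'A'|$ back to $|AA|$. The only difference is that you explicitly track the $\alpha^{\pm O(\epsilon)}$ factors from Theorem \ref{absvmvt} (which the paper's exposition suppresses by stating the simplified hypothesis \eqref{assum1}), and you correctly note they are $|A|^{o(1)}$ under the hypothesis on $\log\alpha^{-1}$, so this adds rigor without changing the argument.
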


Theorem $\ref{absmain}$ states that if a set $A$ has extremally large $J_{s,k}(A)$, then it can not have a small product set $AA$. We remark that if we change our definition of set $\mathcal{U} \subseteq \mathbb{R}$ to be the set of all $\lambda$ such that
\[ J_{s,k}(A) \leq |A|^{2s - k(k+1)/2} |A|^{\lambda}\] 
for all finite, large enough sets $A \subseteq \mathbb{Z}$ satisfying $X_A \leq |A|^{m}$ for some fixed $m$, then Theorem $\ref{BDG1}$ implies that $\Lambda = \inf \mathcal{U} = 0$. Thus Theorem $\ref{absvmvt}$ can be seen as a generalisation of Theorem $\ref{bsgvmvt}$ for arbitrary subsets of integers.


\bibliographystyle{amsbracket}
\providecommand{\bysame}{\leavevmode\hbox to3em{\hrulefill}\thinspace}

\end{document}